%

\documentclass[aap,  preprint]{imsart}

\usepackage{amsthm,amsmath,natbib}
\usepackage{amsfonts}


\startlocaldefs

\newtheorem{theorem}{Theorem}
\newtheorem{lemma}{Lemma}

\newtheorem{remark}{Remark}

\def\be{\begin{equation}}
\def\ee{\end{equation}}
\def\beq{\begin{eqnarray*}}
\def\eeq{\end{eqnarray*}}
\def\bea{\begin{eqnarray}}
\def\eea{\end{eqnarray}}

\def\Var{{\rm Var}}
\def\var{{\rm Var}}
\def\Cov{\rm Cov}

\newcommand{\BY}{\mathbf{Y}}

\newcommand{\reals}{{\mathbb R}}
\newcommand{\bbr}{\reals}
\newcommand{\bbz}{{\mathbb Z}}
\newcommand{\bbc}{{\mathbb C}}

\newcommand{\eid}{\overset{d}{=}}
\newcommand{\vep}{\varepsilon}

\newcommand{\one}{{\bf 1}}
\newcommand{\pp}{{\prime\prime}}


\endlocaldefs

\begin{document}

\begin{frontmatter}

\title{Clustering of large deviations in moving average processes: the short memory regime}
\runtitle{Clustering of large deviations}


\author{\fnms{Arijit} \snm{Chakrabarty}\ead[label=e1]{arijit.isi@gmail.com}}
\address{Theoretical Statistics and\\ Mathematics Unit \\
  Indian Statistical Institute, Kolkata \\
\printead{e1}}
\and
\author{\fnms{Gennady} \snm{Samorodnitsky} \thanksref{t1} \ead[label=e2]{gs18@cornell.edu}}
  \thankstext{t1}{ The corresponding author. Research partially 
   supported by NSF grant  DMS-2015242  and AFOSR grant
   FA9550-22-1-0091 at Cornell University.}
 \address{School of Operations Research\\
   and Information Engineering\\
  Cornell University\\
\printead{e2}}
\affiliation{Indian Statistical Institute, Kolkata and Cornell University}

\runauthor{Chakrabarty and Samorodnitsky}

\begin{abstract}
We describe the cluster of large deviations events that arise when one
such large deviations event occurs. We work in the framework of an
infinite moving average process with a noise that has finite
exponential moments. 
\end{abstract}

\begin{keyword}[class=AMS]
\kwd[Primary ]{60F10}
\end{keyword}

\begin{keyword}
  \kwd{large deviations}
   \kwd{clustering}
   \kwd{infinite moving average}
 \end{keyword}

\end{frontmatter}


\section{Introduction} \label {sec:intro}
Equity premium is a common notion in finance. It is the difference
between long-term cumulative returns on the stock and the safe interest rate
returns (see e.g. \cite{goyal:welch:2003}); it enters as input into
stock options issued on a regular basis. Long-term persistence in
climate refers to the phenomenon where climatologically relevant
parameters (for example, the temperature) take higher than usual mean 
values over multiple time long periods (see
e.g. \cite{rybski:bunde:havlin:vonstorch:2006}). What these diverse
themes and many others like them have in common is that in the
critical cases one observes a time series over long intervals where it
averages out to ``unexpected'' values. These unexpected values cluster. What
type of probabilistic analysis 
should address such phenomena?

A very crude classification of how we analyse random systems might
split the work into distributional analysis and large deviations
analysis. The distributional analysis deals with the ``usual''
deviations of a system from its ``average'' state, while the large
deviations analysis deals the ``unusually large'' deviations, that
are, by necessity, rare (but may have a major impact). The
idea of clustering is a major idea in how we look at random
systems. Clustering typically means that certain related events occur
``in proximity to each other'' and, when it happens, the impact of the
events may be magnified, as in the situations we started
with. Clustering is also interesting in its own right because it may shed
light on certain structural elements in a random system. Clustering is
most frequently studied in distributional analysis; an important
example is clustering of extreme values; see
e.g. \cite{embrechts:kluppelberg:mikosch:2003}. In the above examples,
however, one is interested in clustering of ``unexpected'' values over
long time periods. The point of view of large deviations is,
therefore, called for.

In this work we are discuss clustering of large deviations
events. From a different point of view, we would like to understand
whether or not a (rare) large deviations event is likely to cause a
cascade of additional large deviations events and, if so, what does
this cascade look like. Literature on large deviations analysis is
vast, however to the best of our knowledge, clustering has not been
considered even in the i.i.d. case; the applications we have in mind
call for models with dependence. The nature of large deviations is
known to be different in
stochastic systems with ``light tails'' and with ``heavy tails''. The
texts such as \cite{dembo:zeitouni:1998} or
\cite{deuschel:stroock:1989} describe large deviations of light-tailed
systems, while \cite{mikosch:nagaev:1998} will give the reader an idea
how large deviations occur in heavy-tailed systems. Large deviations
are affected not only by the ``tails'' in a random system, but also
the ``memory'' in that system, in particular by whether the memory is
``short'' or ``long''. The change from short to  long memory in a 
system can be viewed as a phase transition (see
\cite{samorodnitsky:2016}), and it affects large deviations as
well. In this work we study clustering of large deviations in a
light-tailed system, but we will consider both short memory and long
memory situations.

Let us now be more specific about the class of stochastic models we
will consider. We will consider centred infinite moving average processes
\begin{equation} \label{e:MA}
X_n=\sum_{i=-\infty}^\infty a_i Z_{n-i}\,,\ n\in\bbz\,,
\end{equation}
where $(Z_n:n\in\bbz)$ is a collection of i.i.d.\ non-degenerate
random variables ({\it the noise}) with distribution $F_Z$ satisfying 
\begin{equation}  \label{eq.exp}
  \int_\bbr e^{tz}\, F_Z(dz)<\infty\text{ for all }t\in\bbr\,,
\end{equation}
and
\begin{equation} \label{ima.eq1}
  \int_\bbr z\, F_Z(dz) =0\,.
\end{equation}
For future reference we denote
\begin{equation}
  \label{eq.defsigmaz}
  \sigma_Z^2=\int_\bbr z^2\, F_Z(dz). 
\end{equation}

Let  $\ldots, a_{-1}, a_0,a_1,a_2\ldots$ be real numbers satisfying 
\begin{equation}   \label{eq.sqsum}
  \sum_{j=-\infty}^\infty a_j^2<\infty\,.
\end{equation}
Since the assumption \eqref{eq.exp} implies that the noise variables
have a finite second moment, the zero mean property assumed in \eqref{ima.eq1} and the square integrability of the coefficients \eqref{eq.sqsum} imply that  the infinite sum in the
right hand side  of \eqref{e:MA} 
converges in $L^2$ and a.s.\  and defines a zero mean stationary ergodic 
process. Therefore, for $\vep>0$ the event
$$
E_0(n,\vep)=\left\{\frac1{n}\sum_{i=0}^{n-1}X_i\ge \vep\right\}
$$
is, for large $n$, a rare, large deviations, event. We would like to
understand whether occurrence of this event may cause a cascade of
related events. Specifically, for $j\geq 0$ we denote 
\begin{equation} \label{e:E.j}
E_j(n,\vep)=\left\{\frac1{n}\sum_{i=j}^{n+j-1}X_i\ge \vep\right\}\,,
\end{equation}
so that each event $E_j(n,\vep)$ is equally rare, and we would like to
know how many of the events for $j$ ``reasonably close to $j=0$'' 
occur if $E_0(n,\vep)$ occurs (the reason for the qualifier
``reasonably close to $j=0$'' is that by ergodicity, the events
$E_j(n,\vep)$ will keep recurring eventually, regardless of the
structure of the system).

The difference between short memory infinite moving average processes
and  long memory infinite moving average processes lies in the rate
the coefficients $(a_n)$ converge to zero (subject to the square
summability, of course). This will lead to markedly different
cascading of the events $E_j(n,\vep)$, conditionally on the event
$E_0(n,\vep)$ occurring. In this paper we consider the short  
memory processes; the long memory case is studied in another
paper -- \cite{chakrabarty:samorodnitsky:2023}.  Specifically, we describe the limiting distribution of the
large deviation cluster caused by the rare event  $E_0(n,\vep)$ as
well as the behaviour of the size of that cluster as the overshoot
$\vep$ becomes small. It turns out that for such $\vep$ the size of
the cluster is of the order $\vep^{-2}$.

Our main results on the cluster of large deviations for   short memory
infinite moving average processes are  in
Section \ref{sec:short.mem}. They are presented in a somewhat more
general way than discussed so far. Even though the ``causal'' point of
view encourages us to consider the ``forward'' large deviation events 
$E_j(n,\vep)$ in \eqref{e:E.j} for $j\geq 0$, in certain situations it
is also interesting to look at the ``backward'' large deviation events 
$E_j(n,\vep)$ with $j<0$, and this is included in Section \ref{sec:short.mem}. 
The concluding Section
\ref{sec:discussion} contains a discussion of the results we obtain
and connects them to what one may expect when the memory becomes
long. 

\section{Short memory moving average processes} \label{sec:short.mem}

We follow the common terminology and say that the infinite moving
average process \eqref{e:MA} has short memory if
\begin{equation} \label{e:sm.def}
\sum_{n=-\infty}^\infty |a_n|<\infty \ \ \text{and} \ \ \sum_{n=-\infty}^\infty
a_n\not= 0.
\end{equation}
Large and moderate deviations for such processes have been studied in,
for example, \cite{jiang:wang:rao:1992} and 
\cite{djellout:guillin:2001}. 

We investigate the clustering of the rare events $(E_j(n,\vep))$ in
the following way. We will show that the conditional law of the
process of occurrences of the large deviation events, 
\begin{equation} \label{e:exceed.process}
  \bigl( \one(  E_j(n,\vep), \, j\in\bbz)\bigr), 
\end{equation}
given $E_0(n,\vep)$ has a non-degenerate weak limit and describe that
limit. This will show, in particular, that for  fixed $K_-,\, K_+\in\bbz$
the joint 
conditional law of the total numbers of occurrences among the last
$K_-$ of the events $(E_j(n,\vep))$ before 0 and the 
first $K_+$
of the events $(E_j(n,\vep))$ after 0, 
\begin{equation} \label{e:law.K}
\nu_n(K_-,K_+,\vep)(\cdot)=
P\left[\left(\sum_{j=-K_-}^{-1}\one(E_j(n,\vep)),\, \sum_{j=1}^{K_+}\one(E_j(n,\vep))\right)\in\cdot\Biggr|E_0(n,\vep)\right]
\end{equation}
has a weak limit.  That weak limit itself converges weakly, as
$K_-, \, K_+\to\infty$, to an a.s.\  finite random variable that we interpret as
the size of the cluster of large deviation events containing a large
deviation event at time zero. An interesting regime is that of a small
$\vep>0$, and we show that a properly normalized size of the cluster of
large deviation events converges weakly, as $\vep\to 0$, 
to an
(interesting) limit.  As we have explained above, the limits 
should be taken in this specific order.

To state the main results of this section we need to introduce some
notation first. Denote 
\begin{equation}
  \label{ima.eq3}
A=\sum_{n=-\infty}^\infty a_n = \sum_{n=-\infty}^{-1} a_n + \sum_{n=0}^\infty a_n := A^-+A^+. 
\end{equation}
In the sequel we will assume that $A>0$. Note that, in view of
\eqref{e:sm.def}, this introduces no real loss of generality because,
if the sum is negative, we simply multiply both  $(Z_n)$ and $(a_n)$
by $-1$ and reduce the situation to the case $A>0$ we are
considering. Further, for $n=0,1,2,\ldots$ we write 
\begin{equation}
\label{sm.eq0} A_n^-= \sum_{j=1}^n a_{-j}, \ 
A_n^+=\sum_{j=0}^na_j \,.
\end{equation}

Next, we let 
\begin{equation}
\label{ima.eq.varphiz}\varphi_Z(t)=\log\left(\int_\bbr e^{tz}\,
  F_Z(dz)\right),\ t\in\bbr
\end{equation}
to be the log-Laplace transform of a noise variable. 
For $\theta\in\bbr$ we denote by $G_\theta$ the probability measure on
$\bbr$  obtained by exponentially tilting $F_Z$ as follows: 
\begin{equation}
\label{sm.eqdefg}G_\theta(dx)=e^{-\varphi_Z(\theta)+\theta x}F_Z(dx),\
x\in\bbr\,.
\end{equation}
Further, let 
\[
s_0=\sup\{x\in\bbr:F_Z(x)<1\}\in (0,\infty]
\]
be the right endpoint of the support of a noise variable.  We will
consider the events $(E_j(n,\vep))$ for $\vep$ satisfying 
\begin{equation}   \label{sm.eqvep}
  0<\vep/A<s_0\,.
\end{equation}
The function $\varphi_Z$ is infinitely differentiable, and its first
derivative $\varphi_Z^\prime$ strictly increases from $0$ at $t=0$ to
$s_0$ as $t\to\infty$. Therefore, for $\vep$ satisfying
\eqref{sm.eqvep}, we can unambiguously define $\tau(\vep)>0$ by 
\begin{equation}   \label{sm.eqtau}
  \varphi_Z^\prime\left(\tau(\vep)\right)= \vep/A\,.
\end{equation}
 
We now introduce a collection  $\{Z_j^u:j\in\bbz, \, u=+\text{ or
}-\}$ of independent random variables with the following laws:
\begin{equation} \label{e:Z.shifted}
\begin{array}{ll}
Z^-_{-j}\sim G_{\tau(\vep)(A^+-A^+_{j-1})/A}\,,&j\ge1\,,\\
Z^-_j\sim G_{\tau(\vep)(A^++A^-_j)/A}\,,&j\ge0\,,\\
Z^+_{-j}\sim G_{\tau(\vep)(A_{j-1}^++A^-)/A}\,,&j\ge1\,,\\
Z^+_{j}\sim G_{\tau(\vep)(A^--A_j^-)/A}\,,&j\ge0\,. 
\end{array}
\end{equation}
Finally,  let $T^*$ be an exponential random variable with parameter
$\tau(\vep)/A$, independent of the family \eqref{e:Z.shifted}. 

It is elementary to check that for any $j\in \bbz$ and $u=+\text{ or
}-$, 
\begin{equation*}
  E\left(|Z_j^u|\right)\le\int_\bbr |x|\, G_{A^{-1}\bar
    A\tau(\vep)}(dx)\,+\int_\bbr |x|\, G_{-A^{-1}\bar
    A\tau(\vep)}(dx)<\infty\,,
\end{equation*}
where
$$
\bar A=\sum_{n=-\infty}^\infty |a_n|.
$$
Therefore, the infinite series 
\begin{align} \label{e:U}
U^-_n&=\sum_{i=-\infty}^\infty a_iZ^-_{n-i}, \ n\in\bbz\,,\\
\notag U^+_n&=\sum_{i=-\infty}^\infty a_iZ^+_{n-i}, \ n\in\bbz 
\end{align}
converge in $L^1$ and a.s.  We define
\begin{equation} \label{e:V}
V_j(\vep)=\left\{ \begin{array} {ll}
                    \one\left(T^*\ge\sum_{i=0}^{j-1}(U_i^--U_i^+)\right),
                    & j\ge1, \\
                 \one\left(T^*\ge\sum_{i=j}^{-1} (U_i^+ -U_i^-)\right), & j<0.
\end{array} \right.
\end{equation}

\begin{remark} \label{rk:iid}
  {\rm
It is instructive to see what the above random objects become in the
i.i.d. case $a_0=1, \, a_i=0$ for all $i\not=0$. Then $U^-_{-j}=Z^-_{-j}\sim G,
\, j\ge1, \, U^-_{j}=Z^-_j\sim G_{\tau(\vep)}, \, j\geq 0, \, U^+_{-j}=Z^+_{-j}\sim
G_{\tau(\vep)}, \, j\geq 1, \, U^+_{j}=Z^+_{j}\sim G, \, j\geq 0$. The
processes $\sum_{i=0}^{j-1}(U_i^--U_i^+), \, j\geq 1$ and
$\sum_{i=j}^{-1} (U_i^+ -U_i^-), \, j<0$ become independent random
walks with the same step distribution, that of the difference  $A-B$
of independent random variables, $A\sim G_{\tau(\vep)}$, $B\sim
G$. That is, $V_j(\vep)$ for both positive and negative $j$ are equal
to 1 until the first times these random walks cross level $T_*$, after
which they vanish. 
  }
\end{remark}

We are now ready to state the main theorems of this section. They rely
on a technical assumption, excluding the case of a lattice-valued
noise. We assume that 
\begin{equation} \label{ima.eq0}
  \left|\int_\bbr e^{i tz} \,
    F_Z(dz)\right|<1\ \ \text{for any $t\not=0$ where $i=\sqrt{-1}$.}
\end{equation}
Our first result describes the behaviour 
of the sequence of
conditional laws of the process of the overshoots of the  level
$n\vep$ by the partial sums of length $n$. This leads to the limiting
behaviour of  the sequence of
conditional laws of the process of 
occurrences of the large deviations
events \eqref{e:exceed.process} and of the sequence 
\eqref{e:law.K}  of the total number of occurrences
among the first $K$ of the events $(E_j(n,\vep))$. The weak
convergence of sequences of random variables stated in this theorem
occurs in the usual topology of finite-dimensional convergence in $\bbr^\infty$ (or its restriction to
$\{0,1\}^\infty$). 
\begin{theorem}\label{sm.t1}
Assume that \eqref{e:sm.def} holds and $A>0$ in
\eqref{ima.eq3}. Assume, further, that the characteristic function of
the noise variables satisfies \eqref{ima.eq0}. Let  $\vep$ be as in
\eqref{sm.eqvep}. Then, as $n\to\infty$, 
\begin{align} \label{e:overshoots}
&P\left( \left( \sum_{i=j}^{j+n-1} X_i-n\vep, \, j\in\bbz\right)
  \in\cdot \Bigg| E_0(n,\vep)\right) \\
\notag \Rightarrow
&\Biggl( 
T^*+\sum_{i=j}^{-1} U_i^--\sum_{i=j}^{-1} U_i^+,\   j=\ldots,-2, -1,\\
\notag &
 \hskip 0.1in T^*-\sum_{i=0}^{j-1}U_i^- +
                                          \sum_{i=0}^{j-1} U_i^+, \ j=0,1,2,\ldots\Biggr)   
\end{align}
in $\bbr^\infty$. In particular, 
\begin{equation} \label{e:seq.conv}
P\Bigl(  \bigl( \one(  E_j(n,\vep), \,
j\in\bbz)\bigr)\in\cdot \Big| E_0(n,\vep)\Bigr) \Rightarrow P\Bigl(\bigl( 
V_j(\vep), \, j\in\bbz\bigr)\in\cdot\Bigr)  
\end{equation}
in $\{0,1\}^\infty$.  Furthermore,  for 
 every fixed $K_-,\, K_+\ge1$, the conditional laws $(\nu_n(K_-,K_+,\vep))$ in
\eqref{e:law.K} satisfy 
\begin{align}   \label{sm.t1.claim}
 \nu_n(K_-,K_+,\vep)(\cdot) 
\Rightarrow P\left[\left(\sum_{j=-K_-}^{-1}V_j(\vep), \, \sum_{j=1}^{K_+}V_j(\vep)\right)\in\cdot\right]\ \ \text{as} \
  n\to\infty\,.
\end{align}
\end{theorem}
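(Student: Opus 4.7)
My plan is to establish (\ref{e:overshoots}) first; once that is proved, (\ref{e:seq.conv}) and (\ref{sm.t1.claim}) follow by the continuous mapping theorem applied coordinatewise to $x\mapsto\one(x\ge 0)$ and to finite summations, since the limiting right-hand side of (\ref{e:overshoots}) has no atom at $0$ in any coordinate ($T^*$ is exponential and independent of the $U^\pm$'s). Because weak convergence in $\bbr^\infty$ means convergence of finite-dimensional distributions, I fix a finite $J\subset\bbz$ and work with the joint law of $(S_n^{(j)}-n\vep)_{j\in J}$, where $S_n^{(j)}:=\sum_{i=j}^{j+n-1}X_i$.

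\textbf{Exponential change of measure.} Writing $S_n^{(j)}=\sum_k c_{n,k}^{(j)}Z_k$ with $c_{n,k}^{(j)}:=\sum_{i=j}^{j+n-1}a_{i-k}$ and $c_{n,k}:=c_{n,k}^{(0)}$, I set $\theta:=\tau(\vep)/A$ and let $P^*$ be the product measure under which the $Z_k$ remain independent with $Z_k\sim G_{\theta c_{n,k}}$; the Radon--Nikodym density is
\[
\frac{dP}{dP^*}=\exp\!\Bigl(-\theta S_n+\sum_k\varphi_Z(\theta c_{n,k})\Bigr).
\]
Consequently, for any bounded continuous $F:\bbr^{|J|}\to\bbr$,
\[
E\bigl[F\cdot\one(S_n\ge n\vep)\bigr]=C_n\,E^*\!\bigl[F\cdot\one(S_n\ge n\vep)\,e^{-\theta(S_n-n\vep)}\bigr]
\]
with a deterministic normaliser $C_n$ that cancels upon conditioning on $E_0(n,\vep)$. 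A short computation using (\ref{sm.eqtau}) gives $E^*(S_n)/n\to\vep$, and (\ref{e:sm.def}) yields $\Var^*(S_n)\sim n\sigma_*^2$ for some $\sigma_*^2>0$.

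\textbf{Three regimes under $P^*$.} Under $P^*$ the sequence $(Z_k)_k$ splits into three regimes: (i) a left boundary where, for each fixed $k$, $\theta c_{n,k}=\tau(\vep)c_{n,k}/A$ converges to exactly the value of the index of $Z^-_k$ in (\ref{e:Z.shifted}), so that $Z_k\Rightarrow Z^-_k$; (ii) a right boundary where, for each fixed $m$, $Z_{n+m}\Rightarrow Z^+_m$; and (iii) a bulk in which $c_{n,k}/A\to 1$ and the $Z_k$'s behave as if i.i.d.\ $G_{\tau(\vep)}$. For $j>0$ I decompose
\[
S_n^{(j)}-n\vep=(S_n-n\vep)+\sum_{i=n}^{n+j-1}X_i-\sum_{i=0}^{j-1}X_i,
\]
and symmetrically for $j<0$. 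Using (\ref{e:sm.def}) to discard negligible tails, the two short boundary sums depend essentially only on boundary $Z_k$'s near $0$ or near $n$, and they converge jointly in $j\in J$ under $P^*$ to $\sum_{i=0}^{j-1}U_i^+$ and $\sum_{i=0}^{j-1}U_i^-$ respectively, independently of each other and of the CLT-scale bulk fluctuation of $S_n-n\vep$.

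\textbf{Local CLT and main obstacle.} The remaining step is a local limit theorem, legitimate because exponential tilting preserves the non-lattice property (\ref{ima.eq0}). It yields that the density of $S_n-n\vep$ under $P^*$ at any $x=O(1)$ is asymptotically $(2\pi n\sigma_*^2)^{-1/2}$, uniformly and jointly with a local decoupling statement asserting that, given $S_n-n\vep=x$, the boundary $Z$'s have the same asymptotic joint law as unconditionally. Reweighting by $e^{-\theta(S_n-n\vep)}\one(S_n\ge n\vep)$ then turns $S_n-n\vep$ into a random variable with density proportional to $e^{-\theta x}\one(x\ge 0)$, i.e., an $\mathrm{Exp}(\theta)=\mathrm{Exp}(\tau(\vep)/A)$ variable $T^*$ independent of the boundary limits $U_i^\pm$. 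Substituting into the above decomposition yields (\ref{e:overshoots}). The principal technical obstacle is exactly this joint local CLT: one needs uniform control of the characteristic function of $S_n$ under $P^*$ away from $0$ (this is where (\ref{ima.eq0}) enters) together with the asymptotic factorisation between the overshoot and the boundary data. The remaining identifications of the $U^\pm$ limits of the boundary sums via absolute summability, and the passage from (\ref{e:overshoots}) to (\ref{e:seq.conv}) and (\ref{sm.t1.claim}) by the continuous mapping arguments noted in the reduction step, are routine.
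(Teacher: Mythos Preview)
Your outline matches the paper's approach closely: exponential tilting, a sharp (non-logarithmic) local limit statement for the overshoot $S_n-n\vep$ producing the exponential limit $T^*$, convergence of the boundary noise variables to the tilted laws \eqref{e:Z.shifted}, and a truncation step passing from finitely many $Z_k$'s to the infinite-series $X_m$'s. The paper packages your ``joint local CLT with decoupling'' as Lemma~\ref{sm.l3}, proved by splitting off a truncated sum $\bar S_n$ that omits the boundary $Z$'s and applying the Chaganty--Sethuraman sharp large-deviation theorem to it; this is precisely your factorisation. A minor variation: the paper tilts by the $n$-dependent $\theta_n$ with $\psi_n'(\theta_n)=\vep$ (so $E^*S_n=n\vep$ exactly), whereas your fixed $\theta=\tau(\vep)/A$ gives $E^*S_n=n\vep+O(1)$; both work, yours is slightly cleaner for identifying the boundary tilts, the paper's is slightly cleaner for the local limit step.

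One step in your proposal is too quick. The passage from boundary $Z$'s to boundary $X$'s (``the remaining identifications of the $U^\pm$ limits \ldots\ via absolute summability \ldots\ are routine'') is not routine under the conditional law $P(\cdot\mid E_0)$. You need the truncation error $\sum_{|m|>M}a_mZ_{i-m}$ to be small in probability uniformly in $n$ \emph{after} conditioning, and a naive bound fails: $E^*[\,|\text{error}|\,e^{-\theta(S_n-n\vep)}\one_{E_0}]\le E^*|\text{error}|$ is small, but the normalising denominator is of order $n^{-1/2}$, so the ratio blows up. What is actually required is the uniform bound $\sup_{n,k}E(|Z_k|\mid E_0)<\infty$, and the paper devotes a separate lemma (Lemma~\ref{sm.l2}) to this, proved by removing a single $Z_j$ and redoing the sharp asymptotics; this is genuine work you have not accounted for. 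A smaller point: state your local limit theorem for intervals rather than densities, since the noise need not be absolutely continuous --- the paper uses only the interval bound from part (b) of Theorem~\ref{f.cs}.
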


\begin{remark} \label{rk:EVT}
{\rm 
The limiting process of overshoots appearing in the right hand side
of \eqref{e:overshoots} has a remarkable property. Define
\begin{equation} \label{e:over.Y}
  Y_j = \left\{ \begin{array}{ll}
  \exp\left\{ T^*+\sum_{i=j}^{-1} U_i^--\sum_{i=j}^{-1} U_i^+\right\},
                  & j=\ldots,-2, -1,                \\
\exp\left\{ T^*-\sum_{i=0}^{j-1}U_i^- +
                                          \sum_{i=0}^{j-1} U_i^+\right\}, & j=0,1,2,\ldots.
\end{array} \right.
\end{equation}
We claim that the process $\BY=\bigl( Y_j, \, j\in\bbz\bigr)$
satisfies the following time change formula:
\begin{equation} \label{e:time.change}
E\bigl[ H\bigl( tB^j\BY\bigr) \one(Y_{-j}>1/t)\bigr] =
t^{\tau(\vep)/A} E\bigl[ H(\BY) \one(Y_{j}>t)\bigr] 
\end{equation}
for every $j\in\bbz, \, t>0$ and every measurable bounded functional
$H$ on $[0,\infty)^\infty$. Here $B$ is the usual backward
shift operator on $[0,\infty)^\infty$.

The time change property \eqref{e:time.change} has been known to be
a important property of the so-called {\it tail process} in extreme
value theory. Specifically, if $\bigl( W_j, \, j\in\bbz\bigr)$ is a
nonnegative stationary process with multivariate regularly varying
tails with exponent $\alpha>0$, then the conditional law of $\bigl( W_j/W_0, \,
j\in\bbz\bigr)$ given $W_0>w$ has a weak limit, as $w\to\infty$. This
limit is the law of the tail process, described by
\cite{basrak:segers:2009}, who also discovered  the time change
property, with the exponent $\tau(\vep)/A$ replaced by $\alpha$ (allowing non-necessarily nonnegative values, and
in dimensions greater than 1).
This property has become  subject of further investigations; we
refer the reader to its recent presentation in a book form in Theorem 5.3.1 in \cite{kulik:soulier:2020}. 

The tail process in extreme value theory arises in the distributional
context of tail behaviour of stationary processes with regularly
varying tails. The limiting process $\BY$ in \eqref{e:over.Y} arises
in the context of large deviations of stationary moving average
processes, without any presence of regular variation in the underlying
model. This makes
appearance of the time change property here unexpected. It
would be interesting to investigate how widespread this phenomenon is
in the realm of large deviations, but we are not pursuing this
question any further in this paper.

In order to prove that the process $\BY$ in \eqref{e:over.Y} satisfies
\eqref{e:time.change}, suppose first that $j\geq 0$. It is elementary
that the Pareto-distributed random variable $W=e^{T*}$ has the property
\begin{equation} \label{e:pareto.prop}
t^{\tau(\vep)/A}E\bigl[ h(W)\one(W>t)\bigr] = E\bigl[
h(tW)1(W>1/t)\bigr],
\end{equation}
valid for any $t>0$ and any bounded measurable function $h$. Using
this property we have
\begin{align*}
 & t^{\tau(\vep)/A} E\bigl[ H(\BY) \one(Y_{j}>t)\bigr] \\
&=
t^{\tau(\vep)/A} E\Biggl[ H\Biggl( W\exp\left\{ \sum_{i=k_1}^{-1}
  (U_i^--U_i^+)\right\}, \ k_1=\ldots, -2, -1, \\
& \hskip 1.1in W \exp\left\{ \sum_{i=0}^{k_2-1} (U_i^+-U_i^-)\right\}, \ k_2=0,1,\ldots\Biggr)\\
&\hskip 1in \one\left( W\exp\left\{ \sum_{i=0}^{j-1}  (U_i^+-U_i^-)\right\}>t\right)\Biggr] \\
&= E\Biggl[ H\Biggl( tW\exp\left\{ \sum_{i=k_1}^{-1}
  (U_i^--U_i^+)\right\}, \ k_1=\ldots, -2, -1, \\
&  \hskip 0.67in 
tW \exp\left\{ \sum_{i=0}^{k_2-1} (U_i^+-U_i^-)\right\}, \ k_2=0,1,\ldots\Biggr)\\
&\hskip 0.5in \exp\left\{ \frac{\tau(\vep)}{A}\sum_{i=0}^{j-1}
          (U_i^+-U_i^-)\right\}
\one\left( W\exp\left\{ \sum_{i=0}^{j-1}  (U_i^--U_i^+)\right\}>1/t\right)\Biggr]. 
\end{align*}
The factor
$$
\exp\left\{ \frac{\tau(\vep)}{A}\sum_{i=0}^{j-1}
  (U_i^+-U_i^-)\right\}
$$
under the expectation in the right hand side introduces additional
exponential tilting into the random variables $Z_d^{\pm}$ in
\eqref{e:Z.shifted}. Specifically, it is a matter of straightforward
calculus to check that, distributionally, $Z_d^-$ becomes $Z_{d-j}^-$
and $Z_d^+$ becomes $Z_{d-j}^+$ for every $d\in\bbz$. That is,
distributionally, $U_n^{\pm}$ becomes $U_{n-j}^{\pm}$ for every
$n\in\bbz$. Therefore,
\begin{align*}
 & t^{\tau(\vep)/A} E\bigl[ H(\BY) \one(Y_{j}>t)\bigr] \\
&= E\Biggl[ H\Biggl( tW\exp\left\{ \sum_{i=k_1}^{-1}
  (U_{i-j}^--U_{i-j}^+)\right\}, \ k_1=\ldots, -2, -1, \\
&  \hskip 0.67in 
tW \exp\left\{ \sum_{i=0}^{k_2-1} (U_{i-j}^+-U_{i-j}^-)\right\}, \ k_2=0,1,\ldots\Biggr)\\
&\hskip 0.5in  
\one\left( W\exp\left\{ \sum_{i=0}^{j-1}  (U_{i-j}^--U_{i-j}^+)\right\}>1/t\right)\Biggr]\\
&= E\bigl[ H\bigl( tB^j\BY\bigr) \one(Y_{-j}>1/t)\bigr], 
\end{align*}
with the last step following from a rearrangement of indices. This
proves \eqref{e:time.change} for $j\geq 0$. Finally, if $j<0$, then we
apply \eqref{e:time.change} to $\tilde j=-j$, $\tilde t = 1/t$ and
$\tilde H(\cdot) = H(tB^j\cdot)$, proving \eqref{e:time.change}  in
this case as well. 
}
\end{remark}

\begin{remark} \label{rk:HT}
{\rm 
The conditional limiting behaviour described in Theorem \ref{e:sm.def}
reflects the nature of large deviations for weighted sums of random
variables with exponentially light tails: the terms in the sums
``conspire  to change their distributions just right'' to make the
rare event happen (and the change in distributions is reflected in the
exponential tilting). As the tails become heavier, the nature of
large deviations gradually changes from a ``conspiration'' to a
``single extraordinary value'' phenomenon. This will result in a
change of how large deviation events cluster. We outline it in  a
simple example and leave a full discussion to a different occasion. 
Consider the finite moving average $X_n=Z_n+Z_{n-1}, \, n\in\bbz$,
where $(Z_n)$ are i.i.d. 0 mean random variables whose right tail is
regularly varying with exponent $\alpha>1$. Then
$$
S_n = Z_{-1}+Z_{n-1} + 2\sum_{i=0}^{n-2} Z_i,
$$
and the event $E_0(n,\vep)$ is, asymptotically, equivalent to the
event that (exactly) one of the $Z_i$ with $i=0,1,\ldots, n-2$ is
larger than $n\vep/2$, with equal probabilities (it also possible that
$Z_1$ or $Z_{n-1}>n\vep$, but the probability of this goes to 0 as
$n\to\infty$.)  For any $j\geq 1$, the probability that this
exceptional $i$ is in the range $j,\ldots, n-2$ converges to 1, and
the corresponding term $2Z_i$ is also a part of the sum $\sum_{j=i}^{j+n-1}
Z_j$. Therefore, the conditional probability of $E_j(n,\vep)$ given
that $E_0(n,\vep)$ occurs converges to 1 for every $j\geq 1$, as
$n\to\infty$. 
}
\end{remark}

It is natural to interpret the statement of Theorem \ref{sm.t1} as
saying that a large deviation event $E_0(n,\vep)$, upon occurring,
leads to random clusters of large deviation events in the fast and in
the future. The limiting (as
$n\to\infty$)  total sizes of these clusters have  the joint law of
\begin{equation} \label{e:D}
(D_\vep^-,D_\vep^+)=\left( \sum_{j=-\infty}^{-1}
  V_j(\vep),\, \sum_{j=1}^\infty V_j(\vep)\right), 
 \ \vep>0\,. 
\end{equation} 
Our second result of this section shows that for small $\vep>0$  these total cluster sizes
are a.s.\  finite and 
describes their joint limiting behaviour as the overshoot $\vep\to 0$.

\begin{theorem}\label{sm.t2} Under the assumptions of Theorem
  \ref{sm.t1}, the total cluster sizes $D^-_\vep ,D_\vep^+$ are 
  a.s.\  finite for $\vep>0$ small enough. Further,  as $\vep\to0$,
  \begin{align*}
    (\vep^2D_\vep^-,\vep^2D_\vep^+)
 \Rightarrow \Biggl( &A^2\sigma_Z^2\int_0^\infty
\one\left(T_0\ge(\sqrt2B^-_t+t)\right) dt, \\
 &A^2\sigma_Z^2\int_0^\infty
\one\left(T_0\ge(\sqrt2B^+_t+t)\right) dt\Biggr), 
\end{align*}
where $A$ is the sum of the coefficients \eqref{ima.eq3} and
$\sigma_Z^2$ is the noise 
variance \eqref{eq.defsigmaz}. Furthermore, 
$T_0$ is a standard exponential random variable independent of two
independent 
standard Brownian motions $(B_t^\pm:t\ge0)$. 
\end{theorem}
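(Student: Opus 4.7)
The plan is to obtain a joint diffusive scaling limit for the random walks that cross the level $T^*$ in the indicators defining $V_j(\vep)$, and then apply the continuous mapping theorem to Riemann-sum representations of $\vep^2 D_\vep^\pm$. Set $W_i:=U_i^--U_i^+$, $S_j^+:=\sum_{i=0}^{j-1}W_i$ for $j\geq 1$, and $S_k^-:=-\sum_{i=-k}^{-1}W_i$ for $k\geq 1$, so that $D_\vep^+=\sum_{j\geq 1}\one(S_j^+\leq T^*)$ and $D_\vep^-=\sum_{k\geq 1}\one(S_k^-\leq T^*)$.

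Since $\varphi_Z'(0)=0$ and $\varphi_Z''(0)=\sigma_Z^2$, equation \eqref{sm.eqtau} gives $\tau(\vep)=\vep/(A\sigma_Z^2)+o(\vep)$, so $\vep T^*\Rightarrow A^2\sigma_Z^2 T_0$ with $T_0$ standard exponential. The tilting parameters in \eqref{e:Z.shifted} are uniformly of order $\vep$, so $\Var(Z_j^\pm)\to\sigma_Z^2$ uniformly in $j$. A first-order expansion of $\varphi_Z'$ yields $EZ_j^--EZ_j^+\to\sigma_Z^2\tau(\vep)$ as $j\to+\infty$ and $\to-\sigma_Z^2\tau(\vep)$ as $j\to-\infty$; summing against $(a_k)$ gives $EW_i=\vep+o(\vep)$ for $i\to+\infty$ and $EW_i=-\vep+o(\vep)$ for $i\to-\infty$, with boundary-layer deviations near $i=0$ that contribute only $O(\vep)$ to each partial sum. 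Hence $\vep\sum_{i=0}^{\lfloor t/\vep^2\rfloor-1}EW_i\to t$ and $-\vep\sum_{i=-\lfloor t/\vep^2\rfloor}^{-1}EW_i\to t$ uniformly on compact $t$-sets. Because $(Z_j^-)$ and $(Z_j^+)$ are independent with variance tending to $\sigma_Z^2$, the long-run variance of the asymptotically stationary sequence $W$ equals $2\sigma_Z^2 A^2$. A functional CLT for short-memory linear processes driven by a triangular array of independent but non-identically distributed innovations (Lindeberg is automatic thanks to the uniform sub-Gaussianity granted by \eqref{eq.exp}) then yields, jointly with $\vep T^*\Rightarrow A^2\sigma_Z^2 T_0$,
\[
\bigl(\vep S_{\lfloor t/\vep^2\rfloor}^+,\,\vep S_{\lfloor t/\vep^2\rfloor}^-\bigr)_{t\geq 0}\Rightarrow\bigl(t+A\sqrt{2}\sigma_Z B_t^+,\,t+A\sqrt{2}\sigma_Z B_t^-\bigr)_{t\geq 0}
\]
in $D[0,\infty)^2$, with $B^+$ and $B^-$ independent because the cross-covariance between the forward and backward partial sums of the short-memory process $W$ is $O(1)$ and hence vanishes after the $\vep^2$-scaling.

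Positivity of the asymptotic drift together with the SLLN applied to the asymptotically stationary $W_i$ gives $S_j^\pm\to+\infty$ almost surely, so $D_\vep^\pm<\infty$ a.s.\ for all sufficiently small $\vep>0$. Writing
\[
\vep^2 D_\vep^+=\int_0^\infty\one\bigl(\vep S_{\lceil s/\vep^2\rceil}^+\leq\vep T^*\bigr)\,ds,
\]
and analogously for $D_\vep^-$, the continuous mapping theorem applied to the functional $(\omega,c)\mapsto\int_0^\infty\one(\omega(s)\leq c)\,ds$ --- which is almost surely continuous at paths of positive-drift Brownian motion, since such paths diverge to $+\infty$ and meet any fixed level on a Lebesgue-null set --- yields
\[
(\vep^2 D_\vep^-,\vep^2 D_\vep^+)\Rightarrow\Bigl(\int_0^\infty\one(s+A\sqrt{2}\sigma_Z B_s^-\leq A^2\sigma_Z^2 T_0)\,ds,\,\int_0^\infty\one(s+A\sqrt{2}\sigma_Z B_s^+\leq A^2\sigma_Z^2 T_0)\,ds\Bigr).
\]
The time change $s=A^2\sigma_Z^2 t$ together with the Brownian scaling $B_{A^2\sigma_Z^2 t}^\pm/(A\sigma_Z)\stackrel{d}{=}B_t^\pm$ transforms each integral into $A^2\sigma_Z^2\int_0^\infty\one(T_0\geq\sqrt{2}B_t^\pm+t)\,dt$, as stated.

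The main technical obstacle is justifying the passage to the limit inside an integral over the unbounded half-line: the FCLT only provides convergence on compact time intervals, so one must show that $\int_T^\infty\one(\vep S_{\lceil s/\vep^2\rceil}^\pm\leq\vep T^*)\,ds\to 0$ in probability uniformly in $\vep$ as $T\to\infty$. Such uniform tail control can be derived from a Chernoff bound on the tilted partial sums available through \eqref{eq.exp}, or by coupling $S_j^\pm$ to an i.i.d.\ random walk of positive drift $\vep$ and using the exponential tail of its last passage above $T^*$. A secondary point is that the innovations of the moving average are only asymptotically stationary, which forces one to state the FCLT for a triangular array rather than for a fixed stationary process; this is handled by a routine Lindeberg argument given the uniform boundedness of the tilting parameters.
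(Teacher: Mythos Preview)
Your strategy is the paper's strategy: rescale the partial sums of $W_i=U_i^--U_i^+$ by $\vep$ over time scale $\vep^{-2}$, establish a joint functional CLT with drift $t$ and diffusion coefficient $A\sigma_Z\sqrt{2}$, and read off the result via continuous mapping on the occupation-time functional. Your scaling, drift/variance computations, independence of $B^+$ and $B^-$, and final time-change identity all match the paper.

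Where your sketch and the paper's execution diverge is in the details. The paper does not invoke an off-the-shelf FCLT for linear processes; it decomposes $W_\vep(t)$ into a boundary part (indices $k<0$ and $k>[t\vep^{-2}]$) whose variance vanishes, and a main part handled by Lindeberg plus an explicit fourth-moment bound $E\bigl(W_\vep(t)-W_\vep(s)-E[\cdot]\bigr)^4\le C(t-s)^2$ to get tightness via Billingsley's Theorem~13.5. That same fourth-moment bound is then recycled, together with the mean lower bound $EW_\vep(t)\ge t/(2A^2\sigma_Z^2)$, to control the tail $\int_T^\infty$ by Markov's inequality---not by the Chernoff or coupling routes you propose, which would work but require their own separate arguments. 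Finally, the paper proves $D_\vep^\pm<\infty$ by showing $ED_\vep^+<\infty$ from the same tail estimate; your appeal to the SLLN for the ``asymptotically stationary'' $W_i$ is the loosest step in your outline, since the $W_i$ are neither i.i.d.\ nor stationary and a direct SLLN for them needs its own justification (e.g.\ via Kolmogorov's variance criterion for the centered moving average).
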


\bigskip

We prove Theorem \ref{sm.t1} first, and so $\vep>0$ (satisfying
\eqref{sm.eqvep}) is for now fixed. The proof is via several technical
lemmas, and we first sketch the flow of the argument. 
To simplify the notation we will write $E_j$ instead of
$E_j(n,\vep)$ throughout. We start by
deriving a non-logarithmic asymptotic formula for the probability of
$E_0$, which we use to show that, conditionally on $E_0$, all noise
variables remain uniformly bounded in $L_1$ and, further, jointly
weakly converge to the appropriately exponentially tilted laws. This
allows us to prove that the sums of finitely truncated moving averages
converge weakly, and this takes us very close to the finish.

We now embark on the technical details. 
\begin{lemma}\label{sm.l1}
Denote 
\begin{equation}
\label{eq.defsn}S_n=\sum_{i=0}^{n-1}X_i, \ n\ge1\,,
\end{equation} 
and let 
\begin{equation} \label{e:psi}
\psi_n(t)=n^{-1}\log E\left(e^{tS_n}\right), \ t\in\bbr,\ n\ge1\,.
\end{equation}
Then for all large enough $n$  there exists a unique $\theta_n>0$ such that
\[
\psi_n^\prime(\theta_n)=\vep\,.
\]
Furthermore,
\[
P(E_0)\sim\frac C{\sqrt
  n}\exp\Bigl(-n\bigl(\theta_n\vep-\psi_n(\theta_n)\bigr)\Bigr), \ n\to\infty\,,
\]
with 
\[
C=\frac1{\tau(\vep)\sqrt{2\pi\varphi_Z^{\prime\prime}(\tau(\vep))}}\,,
\]
and $\varphi_Z$ and $\tau(\vep)$ defined, respectively,  in
\eqref{ima.eq.varphiz} and \eqref{sm.eqtau}. 
\end{lemma}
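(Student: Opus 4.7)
The plan is to prove Lemma \ref{sm.l1} via a Bahadur--Rao style saddlepoint argument, adapted to the weighted i.i.d.\ representation of $S_n$. First I would write
$$
S_n=\sum_{k\in\bbz}c_{n,k}Z_k,\qquad c_{n,k}=\sum_{i=0}^{n-1}a_{i-k},
$$
and record the book-keeping that follows from $\sum|a_n|<\infty$: the $c_{n,k}$ are uniformly bounded by $\bar A$, they sum to $nA$, and $\sum_{k}|c_{n,k}-A\,\one(0\le k\le n-1)|$ stays bounded in $n$ (the ``boundary'' is controllable). Since
$$
\psi_n(t)=\frac1n\sum_k\varphi_Z(tc_{n,k}),\qquad \psi_n^{(m)}(t)=\frac1n\sum_k c_{n,k}^m\varphi_Z^{(m)}(tc_{n,k}),
$$
plugging in $t=\tau(\vep)/A$ and using $\varphi_Z'(\tau(\vep))=\vep/A$ together with a bulk/boundary split yields
$$
\psi_n'(\tau(\vep)/A)\to\vep,\qquad \psi_n''(\tau(\vep)/A)\to A^2\varphi_Z''(\tau(\vep)).
$$

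Next I would establish existence and uniqueness of $\theta_n$. Non-degeneracy of $Z$ makes $\varphi_Z$ strictly convex, hence so is each $\psi_n$, and $\psi_n'$ is strictly increasing. Since $\psi_n'(0)=0<\vep$ while $\psi_n'(t)\to\infty$ as $t\to\infty$ (use $c_{n,k}\to A>0$ for many $k$), a unique root $\theta_n$ of $\psi_n'=\vep$ exists for all large $n$, and the convergence displayed above forces $\theta_n\to\tau(\vep)/A$.

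For the sharp asymptotic I would Esscher-tilt by $\theta_n$: let $\tilde P_n$ be the law under which $(Z_k)$ are independent with $Z_k\sim G_{\theta_n c_{n,k}}$. The change-of-measure identity gives
$$
P(E_0)=e^{-n(\theta_n\vep-\psi_n(\theta_n))}\,\tilde E_n\!\left[e^{-\theta_n(S_n-n\vep)}\one(S_n\ge n\vep)\right],
$$
with $\tilde E_n S_n=n\vep$ and $\mathrm{Var}_{\tilde P_n}(S_n)=\sigma_n^2:=n\psi_n''(\theta_n)\sim nA^2\varphi_Z''(\tau(\vep))$. Writing
$$
\tilde E_n\!\left[e^{-\theta_n(S_n-n\vep)}\one(S_n\ge n\vep)\right]=\frac1{\theta_n}\int_0^\infty e^{-y}p_n\!\left(n\vep+\tfrac{y}{\theta_n}\right)dy
$$
with $p_n$ the density of $S_n$ under $\tilde P_n$, and applying a local CLT so that $p_n(n\vep)\sim 1/(\sigma_n\sqrt{2\pi})$, the fact that $\theta_n\sigma_n\to\infty$ together with dominated convergence would yield
$$
\tilde E_n\!\left[e^{-\theta_n(S_n-n\vep)}\one(S_n\ge n\vep)\right]\sim\frac1{\theta_n\sigma_n\sqrt{2\pi}}\sim\frac{1}{\tau(\vep)\sqrt{2\pi n\varphi_Z''(\tau(\vep))}}=\frac{C}{\sqrt n},
$$
which is the claim.

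The main obstacle is the local CLT under $\tilde P_n$: the summands $c_{n,k}Z_k$ are independent but not identically distributed, the individual laws depend on $n$ through the tilts $\theta_n c_{n,k}$, and there are infinitely many nontrivial terms. The remedy I would adopt is to decompose the noise indices into a bulk $B_n=\{k:0\le k\le n-1\text{ and }c_{n,k}\text{ is close to }A\}$ of cardinality $n+O(1)$ and a boundary $B_n^c$ whose contribution to $S_n$ has variance bounded uniformly in $n$ by $\sum|a_n|<\infty$. For the bulk sum, the tilts $\theta_n c_{n,k}$ are uniformly close to $\tau(\vep)$, so the non-lattice hypothesis \eqref{ima.eq0}, via a uniform bound $\bigl|\int e^{itz}G_{\theta_n c_{n,k}}(dz)\bigr|\le 1-\eta(\delta)$ for $|t|\ge\delta$, produces the usual exponential decay of the joint characteristic function outside a neighborhood of the origin, and a Gnedenko/Stone-type local CLT applies. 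Convolving with the tight boundary contribution preserves the local CLT at the fixed point $n\vep$, which is all that the integral representation above requires.
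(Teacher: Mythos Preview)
Your approach is the same saddlepoint/Bahadur--Rao strategy the paper uses, but the paper packages the tilting and local-limit step into a single citation: it invokes Theorem~\ref{f.cs} (Chaganty--Sethuraman) from the appendix, so the proof reduces to verifying that theorem's hypotheses. Concretely, the paper (i) shows $\psi_n''(t)\to A^2\varphi_Z''(At)$ locally uniformly, whence $\psi_n'(t)\to A\varphi_Z'(At)$ and $\theta_n\to\tau(\vep)/A$; (ii) verifies the non-lattice condition \eqref{cs.nonlat} by bounding the product of tilted characteristic functions over the bulk indices $n/3\le j\le 2n/3$, where $c_{n,j}$ is uniformly close to $A$, obtaining decay $O(\eta^n)$ for some $\eta<1$ --- exactly your bulk/boundary idea; and (iii) reads off \eqref{cs.eq6}. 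So your reconstruction is faithful to what happens inside the black box; the paper's route is simply shorter because the Esscher integral and local CLT are already encapsulated in Theorem~\ref{f.cs}.

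A few repairs your sketch needs. First, the boundary estimate $\sum_k|c_{n,k}-A\,\one(0\le k\le n-1)|=O(1)$ is false under only $\sum|a_j|<\infty$ (take $a_j=1/(j\log^2 j)$ for $j\ge2$: the sum grows like $n/\log^2 n$); what is true, and what the paper proves and actually uses, is that the off-bulk squared coefficients sum to $o(n)$, so the boundary contribution to $\psi_n''$ is $o(1)$. That weaker statement is all your argument requires. Second, $\psi_n'(t)\to\infty$ fails when $s_0<\infty$; existence of $\theta_n$ should instead come from $\psi_n'(t)\to A\varphi_Z'(At)$ together with $\vep/A<s_0$, as in \eqref{sm.eqvep}. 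Third, $S_n$ need not have a density under $\tilde P_n$ (the hypothesis \eqref{ima.eq0} is merely non-lattice), so the formula with $p_n$ is unavailable; replace it by the standard device $\tilde P_n(S_n-n\vep\in[y,y+h])\sim h/(\sigma_n\sqrt{2\pi})$ and a geometric partition of $[0,\infty)$, or, more economically, just verify the hypotheses of Theorem~\ref{f.cs} as the paper does.
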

\begin{proof}
We write 
\begin{align} \label{sm.l1.eq-1}
S_n 
&=\sum_{j=0}^{n-1}(A^+_{n-1-j}+A^-_j)Z_j+\sum_{j=n}^\infty(A^-_j-A^-_{j-n}) Z_j\\
\nonumber&\,\,\,\,\, +\sum_{j=1}^\infty(A^+_{j+n-1}-A^+_{j-1})Z_{-j}\,,
\end{align}
with $A^+_n,A^-_n$ defined by \eqref{sm.eq0} and check the conditions of
Theorem \ref{f.cs} in the appendix. As a first step we show that
\begin{equation}
\label{sm.l1.eq0}\lim_{n\to\infty}\psi_n^\pp(t)=A^2\varphi_Z^\pp(At)
\end{equation}
locally uniformly in $t\in\bbr$. Indeed, by \eqref{sm.l1.eq-1}, 
\begin{align} \label{e:psi.n}
\psi_n(t)=&\frac1n\Biggl[\sum_{j=0}^{n-1}\varphi_Z\bigl(
  (A^+_{n-1-j}+A^-_j)t\bigr) +\sum_{j=n}^\infty
            \varphi_Z\bigl((A^-_j-A^-_{j-n})t\bigr) \\
\notag &\hskip 0.1in + \sum_{j=1}^\infty\varphi_Z\left((A^+_{j+n-1}-A^+_{j-1})t\right)\Biggr].
\end{align}
It is easy to see that
\begin{equation} \label{e:first.t}
\lim_{n\to\infty}\frac1n\sum_{j=0}^{n-1}  (A^+_{n-1-j}+A^-_j)^2
\varphi_Z^\pp \bigl(
  (A^+_{n-1-j}+A^-_j)t\bigr)=A^2\varphi_Z^\pp(At)\,.
\end{equation}
Indeed, for every   $0<\eta<1/2$, 
$$
\frac1n\sum_{j=[\eta n]}^{[(1-\eta)n]}  (A^+_{n-1-j}+A^=_j)^2
\varphi_Z^\pp \bigl(
  (A^+_{n-1-j}+A^-_j)t\bigr) \to (1-2\eta) A^2\varphi_Z^\pp(At),
$$
since the terms in the sum converge uniformly to the limit. 
Furthermore, since $\varphi_Z^\pp$ is locally bounded,
$$
\frac1n\sum_{j=0}^{[\eta n]-1}  (A^+_{n-1-j}+A^-_j)^2
\varphi_Z^\pp \bigl(
(A^+_{n-1-j}+A^-_j)t\bigr) \leq C\eta
$$
for some finite constant $C$. Since the second remaining part of the
sum in \eqref{e:first.t} can be bounded in the same way, by letting
first $n\to\infty$ and then $\eta\to 0$ we obtain \eqref{e:first.t}. 
Furthermore, as $n\to\infty$, 
\begin{align*}
\sum_{j=1}^\infty(A^+_{j+n-1}-A^+_{j-1})^2\varphi_Z^\pp\left((A^+_{j+n-1}-A^+_{j-1})t\right)&=O\left(\sum_{j=1}^\infty(A_{j+n-1}^+-A^+_{j-1})^2\right)\\
&=O\left(\sum_{j=1}^\infty\sum_{i=j}^{j+n-1}|a_i|\right)\\
&=O\left(\sum_{i=1}^{n}\sum_{j=i}^\infty|a_j|\right) =o(n). 
\end{align*}
Since the second sum in \eqref{e:psi.n} can be estimated in the same
way, and all these steps are locally uniform in $t\in\bbr$, 
\eqref{sm.l1.eq0} follows. Since this argument also shows that
$\psi_n^\pp$ is, uniformly in $n$, locally bounded, and the  values of
 $\psi_n, \varphi_Z$ and their respective first derivatives at $0$ are
 $0$, we also conclude that $\psi_n^\prime$ is, uniformly in $n$,
 locally bounded and  for every $t\geq 0$, 
\begin{align}
\label{sm.l1.eq1}\lim_{n\to\infty}\psi_n^\prime(t)&=A\varphi_Z^\prime(At)\,,\\
\notag \lim_{n\to\infty}\psi_n(t)&=\varphi_Z(At)\,.
\end{align}
The assumption \eqref{sm.eqvep} together with \eqref{sm.l1.eq1}
implies that for large $n$  there exists a unique $\theta_n>0$ such that $\psi_n^\prime(\theta_n)=\vep$, and that
\begin{equation}
\label{sm.l1.eq3}\lim_{n\to\infty}\theta_n=A^{-1}\tau(\vep)\,.
\end{equation}
We choose $n_1$ so large that for $n\geq n_1$, $\theta_n$ is well defined, 
$A/2\le A_{n-1-j}+B_j\le\sqrt2A$ for all $n/3\leq j\leq 2n/3$ 
and $\theta_n\le\sqrt2\tau(\vep)/A$.

We claim next that for fixed $\delta,\lambda>0$ there exists $\eta\in(0,1)$ such that
\begin{equation}
\label{sm.l1.eq4}\sup_{\delta\le|t|\le\lambda\theta_n}\left|\frac1{E(e^{\theta_nS_n})}E\left(e^{(\theta_n+it)S_n}\right)\right|=O\left(\eta^n\right),
\ n\to\infty,
\end{equation}
with the convention that the supremum of the empty set is zero. To see this, note that $\phi:\bbr^2\to\bbc$ defined by
\[
\phi(\theta,t)=\frac1{ E(e^{\theta Z})} E\left(e^{(\theta+i t)Z}\right)\,,
\]
is continuous. For a fixed $\theta\in\bbr$, $\phi(\theta,\cdot)$ is
the characteristic function of the distribution $G_\theta$  in
\eqref{sm.eqdefg}. By \eqref{ima.eq0}, $G_\theta$ is not a lattice
distribution and, hence, for any fixed $\lambda,\delta>0$ and $\theta$,
\[
\sup_{A\delta/2\le|t|\le2\lambda\tau(\vep)}\left|\phi(\theta,t)\right|<1\,.
\]
A standard compactness argument and \eqref{sm.l1.eq3} imply that 
\[
\eta:=\left(\sup_{n\ge n_1, \, n/3\leq j\leq 2n/3,\, 
  A\delta/2\le|t|\le2\lambda\tau(\vep)}\left|\phi\bigl(\theta_n(A^+_{n-1-j}+A^-_j),t\bigr)\right|\right)^{1/3}<1\,, 
\]
while the choice of $n_1$ implies that for 
$n>j\ge n_1$ and $\delta\le|t|\le\lambda\theta_n$,  
\[
A\delta/2\le|(A^+_{n-1-j}+A^-_j)t|\le2\lambda\tau(\vep)\,.
\]
Therefore, by  \eqref{sm.l1.eq-1} and the triangle inequality, 
\begin{align*}
\frac1{ E(e^{\theta_nS_n})}\left| E(e^{(\theta_n+i
  t)S_n})\right|&\le\prod_{j=[n/3]+1}^{\lceil 2n/3\rceil}
                  \left|\phi\bigl(\theta_n(A^+_{n-1-j}+A^-_j),t(A^+_{n-1-j}+A^-_j)\bigr)\right|\\
&\le\eta^{n}\,,
\end{align*}
establishing \eqref{sm.l1.eq4}.

We have now verified all conditions of Theorem \ref{f.cs} for
$T_n=S_n$,  $a_n=n$,  $m_n=\vep$ and $\tau_n=\theta_n$, and
\eqref{cs.eq6} gives us the statement of the lemma. 
\end{proof}

We proceed with showing uniform boundedness of conditional moments of
all noise variables. 
\begin{lemma}\label{sm.l2}
We have 
\begin{equation} \label{e:inf.mom}
\sup_{n\geq 1,\, j\in\bbz} E\left(|Z_j|\bigr| E_0\right)<\infty\,.
\end{equation}
\end{lemma}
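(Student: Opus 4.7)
The goal is to sharpen the crude Chernoff bound $E(|Z_j|\one_{E_0}) \leq C e^{-nI_n}$, where $I_n = \theta_n \vep - \psi_n(\theta_n)$, by an extra factor of $n^{-1/2}$, so that dividing by the sharp asymptotics $P(E_0) \sim C' n^{-1/2} e^{-nI_n}$ from Lemma \ref{sm.l1} yields a bound uniform in $j$ and $n$. Recall from \eqref{sm.l1.eq-1} the representation $S_n = \sum_{k\in\bbz} c_{n,k} Z_k$ with coefficients satisfying $\sup_{n,k}|c_{n,k}| \leq \bar A$. I introduce the tilted law $P_n^*$ via $dP_n^*/dP = e^{\theta_n S_n - n\psi_n(\theta_n)}$; under $P_n^*$ the $Z_k$ stay independent with $Z_k \sim G_{\theta_n c_{n,k}}$. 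Because \eqref{sm.l1.eq3} keeps $|\theta_n c_{n,k}|$ in a fixed compact set, \eqref{eq.exp} yields $\sup_{n,k} E_{P_n^*}(|Z_k|^p) < \infty$ for every $p \geq 1$.

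A change of measure gives
\begin{equation*}
E(|Z_j|\one_{E_0}) = e^{-nI_n} E_{P_n^*}\bigl( |Z_j| e^{-\theta_n (S_n - n\vep)} \one_{S_n \geq n\vep}\bigr),
\end{equation*}
and Fubini rewrites the damping factor as
\begin{equation*}
E_{P_n^*}\bigl( |Z_j| e^{-\theta_n (S_n - n\vep)} \one_{S_n \geq n\vep}\bigr) = \int_0^\infty \theta_n e^{-\theta_n u} E_{P_n^*}\bigl(|Z_j|\one_{0 \leq S_n - n\vep \leq u}\bigr)\, du.
\end{equation*}
The key estimate is that, uniformly in $j$ and $u > 0$ for $n$ large,
\begin{equation*}
E_{P_n^*}\bigl(|Z_j|\one_{0 \leq S_n - n\vep \leq u}\bigr) \leq \frac{C u}{\sqrt{n}}.
\end{equation*}
To prove this, I condition on $Z_j = z$: writing $R_n^{(j)} = S_n - c_{n,j} Z_j$, which is independent of $Z_j$ under $P_n^*$, the conditional probability equals $P_n^*\bigl(n\vep - c_{n,j} z \leq R_n^{(j)} \leq n\vep - c_{n,j} z + u\bigr)$. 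Under $P_n^*$, $R_n^{(j)}$ is a sum of independent non-lattice summands with variance growing linearly in $n$; by \eqref{ima.eq0} the characteristic-function estimates leading to \eqref{sm.l1.eq4} carry over to $R_n^{(j)}$, producing, via Esseen-type smoothing, a density for $R_n^{(j)}$ bounded by $C/\sqrt{n}$ uniformly in $j$ and $n$ large. Hence the conditional probability is at most $C u/\sqrt{n}$, and integrating against $|z| G_{\theta_n c_{n,j}}(dz)$, whose $|z|$-moment is uniformly bounded by the first step, yields the claim.

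Integrating over $u$ against the density $\theta_n e^{-\theta_n u}$ gives $E_{P_n^*}\bigl( |Z_j| e^{-\theta_n (S_n - n\vep)} \one_{S_n \geq n\vep}\bigr) \leq C'/\sqrt{n}$ uniformly in $j$ for $n$ large, and therefore $E(|Z_j|\one_{E_0}) \leq C' n^{-1/2} e^{-nI_n}$. Combining with Lemma \ref{sm.l1} settles the case $n \geq n_0$. For each of the finitely many remaining values of $n$, the bound $E(|Z_j|\mid E_0) \leq E(|Z_0|)/P(E_0(n,\vep))$ is trivially independent of $j$ by stationarity. The main obstacle in this plan is the uniform-in-$j$ local central limit theorem for the family $\{R_n^{(j)}\}$: one has to check that deleting any single summand from $S_n$ does not spoil the $C/\sqrt{n}$ density bound, which boils down to verifying that the characteristic-function lower bounds in the spirit of \eqref{sm.l1.eq4} remain valid with constants independent of $j$, a consequence of the fact that the deleted summand contributes only $O(1)$ to the variance of $S_n$ which itself grows linearly in $n$.
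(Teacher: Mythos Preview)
Your strategy is correct and close in spirit to the paper's: both arguments reduce to a change of measure plus a uniform-in-$j$ concentration bound of the form $\sup_y P_n^*(R_n^{(j)}\in[y,y+1])\le C n^{-1/2}$ for the sum with one summand removed. Two differences are worth noting. First, you tilt at $\theta_n$, whereas the paper tilts at the adjusted point $\tilde\theta_{n,j}$ solving $\tilde\psi_{n,j}'(\tilde\theta_{n,j})=\vep$; since $\tilde\theta_{n,j}=\theta_n+O(1/n)$ uniformly in $j$, either choice works and yours is a little cleaner. Second, and more substantively, you propose to prove the concentration bound for $R_n^{(j)}$ directly from characteristic-function estimates of Esseen type, checking that removing one factor from the product in \eqref{sm.l1.eq4} does not spoil the bound. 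The paper instead takes a shortcut: it already has the concentration bound for the \emph{full} tilted sum $S_n^*$ from part (b) of Theorem~\ref{f.cs}, and transfers it to $S_{n,j}^*$ by comparing the two laws (after first restricting to $|y|\le nM$ via exponential Markov), using that $S_n^* = S_{n,j}^* + \beta_{n,j}Z$ with $Z$ independent, so $P(y\le S_n^*\le y+h)\gtrsim P(y\le S_{n,j}^*\le y+1)$ for $h$ chosen large enough. This avoids redoing the smoothing argument for the whole family $\{R_n^{(j)}\}$. One minor caveat on your write-up: the noise is only assumed non-lattice, so $R_n^{(j)}$ need not have a density; you should phrase the local bound as a concentration-function estimate $P_n^*(R_n^{(j)}\in[y,y+1])\le C n^{-1/2}$ rather than a density bound, which then gives $P_n^*(R_n^{(j)}\in[y,y+u])\le C(u+1)n^{-1/2}$ and the rest of your integration goes through unchanged.
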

\begin{proof}
Fix $j\in\bbz$ and  define 
\[
S_{n,j}=S_n-\beta_{n,j}Z_j, \ n\geq 1, 
\]
where
\[
  \beta_{n,j}=\begin{cases}
    A^-_j-A^-_{j-n}, &    1\leq n\leq j \\
A^+_{n-1-j}+A^-_j,& n\geq j+1
\end{cases}
\]
if $j\geq 0$ and
\[
  \beta_{n,j}= A^+_{n-1-j}-A^+_{-j-1}, \ n\geq 1
\]
if $j\leq -1$, with  $S_n$ is as in \eqref{eq.defsn}.  It follows from
\eqref{sm.l1.eq-1}  that $S_{n,j}$ and $Z_j$ are independent. 
We define
\begin{equation} \label{e:psi.tilde}
\tilde\psi_{n,j}(t)=n^{-1} \log E\left(e^{tS_{n,j}}\right), \ t\in\bbr\,.
\end{equation} 
Since the numbers $(\beta_{n,j})$ are bounded uniformly in $j$ and
$n$, it follows that the functions in \eqref{e:psi} and
\eqref{e:psi.tilde} satisfy 
\[
\psi_{n}^\prime(\theta)=\tilde\psi_{n,j}^\prime(\theta)+O(1/n)\,,
\]
with $O(1/n)$ uniform over $j$  and $\theta$ in a compact
interval. The same argument as in Lemma \ref{sm.l1} shows that for
large $n$ there exists a unique $\tilde\theta_{n,j}>0$ such that
\[
\tilde\psi_{n,j}^\prime(\tilde\theta_{n,j})=\vep\,.
\]
Since $\varphi_Z^\prime$ is locally bounded away from zero,  
it follows from \eqref{sm.l1.eq1} that 
\begin{equation}
\label{sm.l2.eq1}\tilde\theta_{n,j}=\theta_n+O(1/n) 
\end{equation}
with $O(1/n)$ uniform over $j$. This also implies that 
\begin{equation}
\label{sm.l2.eq2}\psi_n(\tilde\theta_{n,j})=\psi_n(\theta_n)+O(1/n)\,.
\end{equation}

For large $n$ we can write 
\begin{align*}
 E\left(|Z_j|\one(E_0)\right)
&=\int_\bbr |z|P(Z_j\in dz)\int_\bbr \one(s+\beta_{n,j} z\ge n\vep) P(S_{n,j}\in ds)\\
  &=\exp\left\{-n\left(\tilde\theta_{n,j}\vep-\psi_n(\tilde\theta_{n,j})\right)\right\}
    \int_\bbr |z|e^{-\tilde\theta_{n,j}\beta_{n,j}z} P(Z_{n,j}^*\in dz)\\
&\,\,\,\,\,\int_{[n\vep-\beta_{n,j}z,\infty)}\exp\left(-\tilde\theta_{n,j}(s-n\vep)\right)
    P(S_{n,j}^*\in     ds)\,,
\end{align*}
where $S_{n,j}^*$ and $Z_{n,j}^*$ are independent random variables with
$Z_{n,j}^*$ having distribution $G_{\tilde\theta_{n,j}\beta_{n,j}}$ and 
\[
P(S_{n,j}^*\in ds)=\frac1{ E\left(\exp(\tilde\theta_{n,j}S_{n,j})\right)}e^{\tilde\theta_{n,j}s}P(S_{n,j}\in ds)\,.
\]
It follows from \eqref{sm.l2.eq1} and \eqref{sm.l2.eq2} that,
uniformly over $j$, 
\begin{align*}
\exp\left\{-n\left(\tilde\theta_{n,j}\vep-\psi_n(\tilde\theta_{n,j})\right)\right\}&=O\left(\exp\left(-n\bigl(\theta_n\vep-\psi_n(\theta_n)\bigr)\right)\right)\\
&=O\left(\sqrt nP(E_0)\right)\,,
\end{align*}
with the second line implied by Lemma \ref{sm.l1}. Therefore, to
complete the proof it suffices to show that, uniformly in $j$, 
\begin{align} \label{sm.l2.eq3}
&\int_\bbr |z|e^{-\tilde\theta_{n,j}\beta_{n,j}z} P(Z_{n,j}^*\in
   dz)\int_{[n\vep-\beta_{n,j}z,\infty)}\exp\left(-\tilde\theta_{n,j}(s-n\vep)\right)
   P(S_{n,j}^*\in ds) \\
\notag &=O\left(n^{-1/2}\right)\,.
\end{align}
This will follow from the following claim: there is $C^\prime>0$ such
that for all $n$ large, 
\begin{equation} \label{e:claim.un}
P(y\le S_{n,j}^*\le y+1)\le C^\prime n^{-1/2}\,,\ y\in\bbr\,, 
\end{equation} 
uniformly in $j$. Indeed, suppose that this is the case. Then  for large
$n$ and every $z\in\bbr$, 
\begin{align*}
&\int_{[n\vep-\beta_{n,j}z,\infty)}\exp\left(-\tilde\theta_{n,j}(s-n\vep)\right) P(S_{n,j}^*\in ds)\\
&\le \sum_{j=1}^\infty e^{-\tilde\theta_{n,j}(j-1-\beta_{n,j}z)}P\left(S_{n,j}^*-n\vep-\beta_{n,j}z\in[j-1,j)\right)\\
&\le C^\prime   n^{-1/2}e^{\tilde\theta_{n,j}\beta_{n,j}z}\left(1-e^{-\tilde\theta_{n,j}}\right)^{-1}\,, 
\end{align*}
which shows \eqref{sm.l2.eq3}.

It remains to prove \eqref{e:claim.un}. We start by observing that for
$M>0$
\begin{align*}
P(S_{n,j}^*>nM)\leq \exp\left\{ n\left[ \tilde \psi_{n,j}(\tilde
  \theta_{n,j}+1)-\tilde\psi_{n,j}(\tilde \theta_{n,j}) -M \tilde
  \theta_{n,j}\right]\right\}. 
\end{align*}
Since the values of both $\tilde \psi_{n,j}(\tilde
  \theta_{n,j}+1)$ and $\tilde\psi_{n,j}(\tilde \theta_{n,j}) $
  remain within a compact set independent of $n$ and $j$, while
  $\theta_{n,j}$ converges, uniformly in $j$,  to $A^{-1}\tau(\vep)>0$, 
 we see that by
 choosing $M$ large enough we can ensure that there is $c>0$ such that
 for all $n$ large  enough, 
 $$
 P(S_{n,j}^*>nM)\leq e^{-cn} \ \ \text{for all $j$.}
 $$
 An identical argument shows that, if $M>0$ is large enough, then
 there is $c>0$ such that
 for all $n$ large  enough, 
 $$
 P(S_{n,j}^*<-nM)\leq e^{-cn} \ \ \text{for all $j$.}
 $$
That means that it suffices to prove that \eqref{e:claim.un} holds for
all $|y|\leq nM$, uniformly in $j$.

Notice that by
part (b) of Theorem \ref{f.cs}, for any $h>0$ there is $C_h^\prime>0$
such that 
\begin{equation} \label{e:claim.know}
P(y\le S_n^*\le y+h)\le C_h^\prime n^{-1/2}\,,\ y\in\bbr,
\end{equation}
where $S_n^*$ is a random variable with the law
\[
P(S_{n}^*\in ds)=\frac1{ E\left(\exp(\theta_{n}S_{n})\right)}e^{\theta_{n}s}P(S_{n}\in ds)\,.
\]
Write 
\begin{align*}
P(y\le S_n^*\le y+h) =&
  \frac{E\left(\exp(\tilde\theta_{n,j}S_{n,j})\right)}{
  E\left(\exp(\theta_{n}S_{n})\right)}
  \frac{1}{E\left(\exp(\tilde\theta_{n,j}S_{n,j})\right)} \\
&\hskip .7in\int_{[y,y+h]}
  \exp\bigl\{  (\theta_n - \tilde \theta_{n,j})s\bigr\}
  e^{\tilde \theta_{n,j}s}P(S_{n}\in ds). 
\end{align*}
By \eqref{sm.l2.eq1}, the factor $\exp\{  (\theta_n - \tilde
\theta_{n,j})s\}$ above is uniformly bounded away from zero over $s\in
[y,y+h]$, $|y|\leq nM$ and $j$. Furthermore,
\begin{align*} 
 \frac{E\left(\exp(\tilde\theta_{n,j}S_{n,j})\right)}{
  E\left(\exp(\theta_{n}S_{n})\right)} 
= \exp\left\{ -n\left[ \psi_n(\theta_n) -\psi_n( \tilde
\theta_{n,j})\right]-\phi_Z(\tilde \theta_{n,j} \beta_{n,j})
\right\},
\end{align*}
and it follows from \eqref{sm.l2.eq2} and uniform boundedness of the
argument of $\phi_Z$ that the ratio above is bounded away from zero
over $n$ and $j$. We conclude that for some $c>0$, for all $n$ large
enough and $|y|\leq nM$,
\begin{align*}
P(y\le S_n^*\le y+h) \geq& c \frac{1}{E\left(\exp(\tilde\theta_{n,j}S_{n,j})\right)}\int_{[y,y+h]}
    e^{\tilde \theta_{n,j}s}P(S_{n}\in ds) \\
\geq& c P(0\leq \beta_{n,j}Z\leq h-1)
   P(y\leq S_{n,j}^*\leq y+1). 
\end{align*}
Since $\beta_{n,j}$ is uniformly bounded, we can choose $h$ large
enough such that $P(0\leq \beta_{n,j}Z\leq h-1)$ is uniformly bounded
away from zero, and \eqref{e:claim.un} follows from
\eqref{e:claim.know}. 
\end{proof}

The next, final, lemma is a major ingredient in the proof of Theorem
\ref{sm.t1}.

\begin{lemma}\label{sm.l3}

For a fixed $k\ge1$, the conditional law of
$(S_n-n\vep,Z_{-k},\ldots$ $\ldots,Z_k,$ $Z_{n-k},\ldots,Z_{n+k})$ given $E_0$ 
 converges weakly, as $n\to\infty$, to the law of  
\[
\left(T^*,Z_{-k}^-,\ldots,Z_k^-,Z_{-k}^+,\ldots,Z_k^+\right)\,.
\]  
\end{lemma}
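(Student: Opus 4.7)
The plan is to combine exponential tilting of the noise with a local limit theorem, after isolating the finitely many noise variables of interest. For $n>2k+1$, let $I=\{-k,\ldots,k\}\cup\{n-k,\ldots,n+k\}$ and, using the coefficients $\beta_{n,j}$ from the proof of Lemma \ref{sm.l2}, decompose $S_n=T_n+R_n$ with $T_n=\sum_{j\in I}\beta_{n,j}Z_j$ and $R_n=\sum_{j\notin I}\beta_{n,j}Z_j$; by \eqref{sm.l1.eq-1}, $R_n$ is independent of $Z_I:=(Z_j)_{j\in I}$. For bounded continuous $f$ on $\bbr^{|I|}$ and $g$ on $\bbr$ with compact support in $[0,\infty)$,
\begin{equation*}
P(E_0)\cdot E\bigl[f(Z_I)\,g(S_n-n\vep)\,\big|\,E_0\bigr]
= \int f(z_I)\,G_n\bigl(t(z_I)\bigr)\,\prod_{j\in I}F_Z(dz_j),
\end{equation*}
where $t(z_I)=\sum_{j\in I}\beta_{n,j}z_j$ and $G_n(t)=\int_0^\infty g(u)\,P(R_n\in n\vep-t+du)$.

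Setting $\psi_n^R(\theta)=n^{-1}\log E[e^{\theta R_n}]$, since $T_n$ perturbs $\psi_n$ only by $O(1/n)$, the argument of Lemma \ref{sm.l1} applied to $R_n$ gives a unique tilt $\theta_n^R=\theta_n+O(1/n)$ solving $(\psi_n^R)'(\theta_n^R)=\vep$. The same non-lattice argument as for \eqref{sm.l1.eq4} applies to $R_n$ (only finitely many uniformly bounded terms are deleted), so a Stone-type local limit theorem holds for the $\theta_n$-tilted $R_n$. This yields, uniformly for $t$ in compact subsets of $\bbr$,
\begin{equation*}
G_n(t)\sim \frac{\exp\bigl\{-n(\theta_n\vep-\psi_n(\theta_n))-\sum_{j\in I}\varphi_Z(\theta_n\beta_{n,j})\bigr\}}{\sigma_*\sqrt{2\pi n}}\,e^{\theta_n t}\int_0^\infty g(u)\,e^{-\theta_n u}\,du,
\end{equation*}
where $\sigma_*^2=A^2\varphi_Z^{\prime\prime}(\tau(\vep))$.

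The tilting identity $e^{\theta_n\beta_{n,j}z-\varphi_Z(\theta_n\beta_{n,j})}F_Z(dz)=G_{\theta_n\beta_{n,j}}(dz)$ absorbs the $z_I$-dependent prefactor $e^{\theta_n t(z_I)-\sum_j\varphi_Z(\theta_n\beta_{n,j})}$ into a product of tilted measures, so that for compactly supported $f$,
\begin{equation*}
E[f(Z_I)\,G_n(T_n)]\sim \frac{e^{-n(\theta_n\vep-\psi_n(\theta_n))}}{\sigma_*\sqrt{2\pi n}}\int_0^\infty g(u)\,e^{-\theta_n u}\,du \cdot E\bigl[f(Z_I^{(n)})\bigr],
\end{equation*}
with the $Z_j^{(n)}\sim G_{\theta_n\beta_{n,j}}$ mutually independent. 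Dividing by $P(E_0)\sim e^{-n(\theta_n\vep-\psi_n(\theta_n))}/(\theta_\infty\sigma_*\sqrt{2\pi n})$ from Lemma \ref{sm.l1} (with $\theta_\infty=\tau(\vep)/A$), the normalization cancels; using $\theta_n\to\theta_\infty$ and $\beta_{n,j}\to\beta_\infty^{(j)}$ for each $j\in I$, the limit of the left-hand side becomes $E[g(T^*)]\cdot E[f(Z_I^{(\infty)})]$, where the coordinates of $Z_I^{(\infty)}$ are independent with laws $G_{\tau(\vep)\beta_\infty^{(j)}/A}$. A case check against \eqref{e:Z.shifted} confirms that the four families of limiting tilting parameters $\tau(\vep)\beta_\infty^{(j)}/A$ coincide exactly with those prescribed for $Z_j^{-}$ and $Z_j^{+}$ there. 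Extension from compactly supported to bounded continuous test functions is standard given tightness of $Z_I$ under $P(\cdot\mid E_0)$ (Lemma \ref{sm.l2}) and the exponential tail of the overshoot implied by the same local asymptotic.

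The main obstacle is upgrading the Cram\'er-type tail asymptotic used in Lemma \ref{sm.l1} to a local density-type statement for $R_n$, uniform in shifts $t$ on compact sets. The non-lattice bound \eqref{sm.l1.eq4} supplies precisely the characteristic-function control required for a Stone local limit theorem under the $\theta_n$-tilted measure; the remaining care is to verify that this uniformity survives both the deletion of the finitely many terms composing $T_n$ and the integration against the $z_I$-dependent shift $t(z_I)$, the latter handled by restricting to $f$ of compact support so that only bounded ranges of $t$ must be controlled.
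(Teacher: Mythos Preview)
Your skeleton matches the paper's: isolate the $4k+2$ noise variables in $I$, factor by independence, and analyze the remaining sum. The key analytical tool, however, differs. The paper tests against events of the form $\{S_n-n\vep\ge t\}\cap\{Z_I\in R\}$, so it needs only a \emph{tail} asymptotic $P(\bar S_n\ge n\vep+x_n)\sim c_ne^{-x\tau(\vep)/A}$ for any $x_n\to x$; this is obtained by reapplying Theorem~\ref{f.cs}(a) to $\bar S_n$ with shifted targets, exactly as in Lemma~\ref{sm.l1}. The constant $c_n$ is never computed: the limit is pinned down by a Fatou-plus-tightness argument using Lemma~\ref{sm.l2}.

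Your route, by testing against a general compactly supported $g$, needs a genuine \emph{local} limit theorem for $R_n$---your displayed asymptotic for $G_n(t)$. This is strictly stronger than anything the paper proves: part~(b) of Theorem~\ref{f.cs} supplies only the upper bound $\sup_n b_nP(y\le\tau_nT_n^*\le y+1)<\infty$, not the matching lower bound. A Stone-type LLT for triangular arrays of independent, non-identically distributed summands does exist, and the characteristic-function decay in \eqref{sm.l1.eq4} is the right hypothesis for it, but you have not actually established it---it is a real step, not a citation. What you gain is a cleaner endgame (normalizations cancel explicitly, no Fatou trick); what you pay is importing heavier machinery not developed in the paper. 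If you want to keep your structure while avoiding the LLT, simply restrict to $g=\one_{[t,\infty)}$: then $G_n(t)$ becomes precisely the tail probability the paper handles, and the rest of your argument goes through unchanged.
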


\begin{proof}
 Consider the following truncated version of $S_n$: 
\begin{align*}
\bar
S_n&=\sum_{j=k+1}^{n-k-1}(A^+_{n-1-j}+A^-_j)Z_j+\sum_{j=n+k}^\infty
(A^-_j-A^-_{j-n}) Z_j\\
&\,\,\,\,\,+\sum_{j=k+1}^\infty(A^+_{j+n-1}-A^+_{j-1})Z_{-j},
\end{align*}
$n\ge2(k+1)$. 
We claim that there exists $c_n>0$ such that for any $x\in\bbr$ and 
any sequence $x_n\to x$, 
\begin{equation}
\label{sm.l3.eq2}P(\bar S_n\ge n\vep+x_n)\sim c_ne^{-x\tau(\vep)/A}\,.
\end{equation}
 To show this we proceed as in the proof of Lemma \ref{sm.l1}. Let 
\[
\bar\psi_n(t)=n^{-1}E\left(e^{t\bar S_n}\right),\ t\in\bbr\,.
\]
Repeating the argument in  Lemma \ref{sm.l1} shows that
\begin{equation}
\label{e:2nd.bar}\lim_{n\to\infty}\bar\psi_n^\pp(t)=A^2\varphi_Z^\pp(At)
\end{equation}
locally uniformly in $t\in\bbr$, and that for large $n$ 
there exists $\bar\theta_n>0$ such that 
\begin{equation}
\label{sm.l3.eq3}\bar\psi_n^\prime(\bar\theta_n)=\vep\,,
\end{equation}
\[
\lim_{n\to\infty}\bar\theta_n=A^{-1}\tau(\vep)\,,
\]
and 
\begin{equation} \label{sm.l3.eq4}
P(\bar S_n\ge n\vep)\sim \frac C{\sqrt
  n}\exp\left(-n(\bar\theta_n\vep-\bar\psi_n(\bar\theta_n))\right),\ n\to\infty\,,
\end{equation}
with $C$  as in Lemma \ref{sm.l1}. The same argument shows that, if
$x_n\to x$, then for large $n$ there exists $\bar\theta_{n,x}>0$ such
that 
\begin{equation}
\label{sm.l3.eq5}\bar\psi_n^\prime(\bar\theta_{n,x})=\vep+n^{-1}x_n\,,
\end{equation}
\[
\lim_{n\to\infty}\bar\theta_{n,x}=A^{-1}\tau(\vep)\,,
\]
and
\begin{equation}
\label{sm.l3.eq6}P(\bar S_n\ge n\vep+x_n)\sim \frac C{\sqrt
  n}\exp\left(-n(\bar\theta_{n,x}(\vep+n^{-1}x_n)-\bar\psi_n(\bar\theta_{n,x}))\right),
\ n\to\infty\,. 
\end{equation}

The mean value theorem applied to \eqref{sm.l3.eq3} and
\eqref{sm.l3.eq5}, together with \eqref{e:2nd.bar} implies that 
\[
A^2\varphi_Z^\pp(\tau(\vep))(\bar\theta_{n,x}-\bar\theta_n)=n^{-1}x_n+o(n^{-1})= n^{-1}x+o(n^{-1})\,.
\]
We use this fact together with another application of the mean value
theorem. Keeping in mind 
\eqref{sm.l3.eq3} and the locally uniform boundedness of the second
derivative implied by \eqref{e:2nd.bar}, we see that 
\[
\bar\psi_n(\bar\theta_{n,x})-\bar\psi_n(\bar\theta_n)=\frac1n\frac{\vep x}{A^2\varphi_Z^\pp(\tau(\vep))}+o(n^{-1})\,.
\]
Putting together the above two displays, we see that
\[
\left(\bar\theta_{n,x}-\bar\theta_n\right)\vep-\bar\psi_n(\bar\theta_{n,x})+\bar\psi_n(\bar\theta_{n})=o(n^{-1})\,,
\]
which in conjunction with  \eqref{sm.l3.eq6} establishes
\eqref{sm.l3.eq2} with $c_n$ given by the right-hand side of
\eqref{sm.l3.eq4}. 

Finally, for $t\geq 0$ and a compact rectangle $R\subset\bbr^{4k+2}$,
\begin{align}
\nonumber&P\left([S_n-n\vep\ge t,\,
 (Z_{-k},\ldots,Z_k,Z_{n-k},\ldots,Z_{n+k})\in R]\cap  E_0\right)\\ 
\nonumber=&P\left(S_n-n\vep\ge t,\, (Z_{-k},\ldots,Z_k,Z_{n-k},\ldots,Z_{n+k})\in R\right)\\
\label{sm.l3.eq7}=&\int_{(x_{-k},\ldots,x_k,y_{-k},\ldots,y_{k})\in R}
P\Bigl(\bar S_n
\ge  n\vep+t-\sum_{j=0}^k(A^+_{n-1-j}+A^-_j)x_{j} \\
\nonumber& 
\hskip.02in-\sum_{j=1}^k(A^+_{j+n-1}
-A^+_{j-1})x_{-j} -\sum_{j=1}^k(A^+_{j-1}+A^-_{n-j})y_{-j} -\sum_{j=0}^k (A^-_{n+j}-A^-_j)y_j\Bigr)\\
\notag &\hskip 1.9in F_Z(dx_{-k})\ldots F_Z(dx_k) 
F_Z(dy_{-k})\ldots F_Z(dy_{k})\\
\nonumber\sim&
               c_ne^{-\tau(\vep)t/A}\int_{(x_{-k},\ldots,x_k,y_{-k},\ldots,y_{k})\in  R}
               \exp\Biggl\{\frac{\tau(\vep)}A\Biggl(\sum_{j=0}^k(A^++A^-_j)x_j
              \\
 \nonumber& \hskip .02in  +\sum_{j=1}^k(A^+-A^+_{j-1})x_{-j}
+\sum_{j=1}^k(A^+_{j-1}+A^-)y_{-j} + \sum_{j=0}^k (A^--A^-_j)y_j
\Biggr)\Biggr\}  \\
\notag &\hskip 1.9in
F_Z(dx_{-k})\ldots F_Z(dx_k)F_Z(dy_{-k}) \ldots
               F_Z(dy_{k}) 
\end{align}
as $n\to\infty$.  In order to justify the asymptotic equivalence above,
note that for each fixed $x_{-k},\ldots,x_k,y_{-k},\ldots,y_{k}$,
$c_n^{-1}$ times the integrand of \eqref{sm.l3.eq7} converges, by
\eqref{sm.l3.eq2}, to  
\begin{align*}
&\exp\Biggl\{\frac{\tau(\vep)}A\Biggl(-t+\sum_{j=0}^k(A^++A^-_j)x_j
+\sum_{j=1}^k(A^+-A^+_{j-1})x_{-j}\\
&\hskip 1.9in +\sum_{j=1}^k(A^+_{j-1}+A^-)y_{-j} 
+ \sum_{j=0}^k (A^--A^-_j)y_j\Biggr)\Biggr\}\,. 
\end{align*}
The absolute value of each of the variables
$x_{-k},\ldots,x_k,y_{-k},\ldots,y_{k}$ in the rectangle $R$ has a
finite upper bound. Replacing each of these variables
 by the corresponding upper bound  of its absolute value 
and using \eqref{sm.l3.eq2} once again, provides
a bound to use in the dominated convergence theorem.

Continuing to keep $k$ an arbitrary fixed positive integer, we claim that, as $n\to\infty$, 
\begin{align} \label{e:E0.int}
P(E_0)&\sim c_n\int_{\bbr^{4k+2}}
        \exp\Biggl\{\frac{\tau(\vep)}A\Biggl( 
\sum_{j=0}^k(A^++A^-_j)x_j
+\sum_{j=1}^k(A^+-A^+_{j-1})x_{-j}\\
\notag &\hskip 1.5in +\sum_{j=1}^k(A^+_{j-1}+A^-)y_{-j} 
+ \sum_{j=0}^k (A^--A^-_j)y_j\Biggr)\Biggr\} \\
\nonumber&\hskip 1.5 in F_Z(dx_{-k})\ldots F_Z(dx_k)F_Z(dy_{-k})\ldots
           F_Z(dy_{k}). 
\end{align}
Once this has been established, the claim of the lemma will follow from
\eqref{sm.l3.eq7} and \eqref{e:E0.int}, completing the argument. To prove
\eqref{e:E0.int}, we notice that by \eqref{sm.l3.eq2} and Fatou's
lemma,
\begin{align} 
\notag&\liminf_{n\to\infty} c_n^{-1} P(E_0)\\
\label{e:fatou}&\geq 
\int_{\bbr^{4k+2}}
        \exp\Biggl\{\frac{\tau(\vep)}A\Biggl( 
\sum_{j=0}^k(A^++A^-_j)x_j
+\sum_{j=1}^k(A^+-A^+_{j-1})x_{-j}\\
\notag &\hskip 1in +\sum_{j=1}^k(A^+_{j-1}+A^-)y_{-j} 
+ \sum_{j=0}^k (A^--A^-_j)y_j\Biggr)\Biggr\} \\
\nonumber&\hskip 1 in F_Z(dx_{-k})\ldots F_Z(dx_k)F_Z(dy_{-k})\ldots
           F_Z(dy_{k}). 
\end{align}
By Lemma \ref{sm.l2} the sequence of the
conditional laws of 
 $(Z_{-k},\ldots,Z_k,Z_{n-k},\ldots$ $\ldots,Z_{n+k})$ given
$E_0$ is tight in $\bbr^{4k+2}$. Let $\nu$ be a subsequential limit of
this sequence. It follows from the above inequality and
\eqref{sm.l3.eq7}  with $t=0$ that
$$
\nu(R)\leq P\bigl( (Z_{-k}^-,\ldots,Z_k^-,Z_{-k}^+,\ldots,Z_{k}^+)\in
R\bigr)
$$
for any compact rectangle $R$ in $\bbr^{4k+2}$, which can only happen
if $\nu$ is, in fact, the law of the random vector $
(Z_{-k}^-,\ldots,Z_k^-,Z_{-k}^+,\ldots,Z_{k}^+)$. Therefore,
\eqref{e:fatou} must hold as an equality. 
\end{proof}

We are now ready to prove the first of our main theorems.

\begin{proof}[Proof of Theorem \ref{sm.t1}]
We start with showing that for every fixed $k\ge1$, conditionally on
$E_0$ as $n\to\infty$, 
\begin{align}
\notag&\left(S_n-n\vep,X_{-k},\ldots,X_k,X_{n-k},\ldots,X_{n+k}\right)\\
\label{sm.t1.eq1}
&\Rightarrow
(T^*,U_{-k}^-,\ldots,U_k^-,U_{-k}^+,\ldots,U_k^+)\,,
\end{align}
with $(U_k^-)$ and $(U_k^+)$ defined in \eqref{e:U}. 
For all $i\ge1$ let 
\[
X_m^{(i)}=\sum_{j=-i}^ia_jZ_{m-j}, \ m\in\bbz\,.
\]
Lemma \ref{sm.l3} implies that for a fixed $i$,
\[
\left(S_n-n\vep,X_{-k}^{(i)},\ldots,X_k^{(i)},X_{n-k}^{(i)},\ldots,X_{n+k}^{(i)}\right)
\]
converges weakly as $n\to\infty$, conditionally on $E_0$, to
\[
 (T^*,U_{-k}^{-(i)},\ldots,U_k^{-(i)},U_{-k}^{+(i)},\ldots,U_k^{+(i)})\,,
\]
where
\[
U_m^{\pm(i)}=\sum_{j=-i}^ia_jZ_{m-j}^\pm, \ m\in\bbz\,.
\]

Note that by  Lemma \ref{sm.l2}, for every $\delta>0$,   
\[
\sup_{n\geq 1}\sup_{m\in\bbz}
P\left(\left|X_m^{(i)}-X_m\right|>\delta|E_0\right)\le\frac1\delta\left[\sup_{n\ge
    1,\, m\in\bbz}E(|Z_m||E_0)\right]\sum_{|j|>i} |a_j|\to0 
\]
as $i\to\infty$.  Since the two series in \eqref{e:U}
converge in probability,  the claim
\eqref{sm.t1.eq1} follows from Theorem 3.2 in
\cite{billingsley:1999}. 

Notice that 
\begin{align*}
\one(E_j)&=\one\left(\sum_{i=j}^{j+n-1}X_i\ge n\vep\right)\\
         &= \left\{ \begin{array} {ll}
        \one\left(S_n-n\vep -\sum_{i=0}^{j-1}X_i
           +\sum_{i=n}^{n+j-1}X_i\geq 0\right) & \text{if} \ j\geq 0,\\
       \one\left(S_n-n\vep + \sum_{i=j}^{-1} X_i - \sum_{i=n+j}^{n-1} X_i\geq 0\right) & \text{if} \ j
<0.
\end{array}\right.
\end{align*}

We conclude by \eqref{sm.t1.eq1} and the continuous mapping theorem
that for $K\geq 1$,

\begin{align*}
&\bigl( \one(E_j), \, j=-K,\ldots, K\bigr) \\
&\Rightarrow\Biggl( \one\biggl(T^*+\sum_{i=j}^{-1} U_i^--\sum_{i=j}^{-1} U_i^+\geq
                                           0\biggr),\,  j=-K,\ldots,
                                            -1, \\
&\hskip 0.4in \one\biggl(T^*-\sum_{i=0}^{j-1}U_i^- +
                                          \sum_{i=0}^{j-1} U_i^+\geq
                                           0\biggr),\,  j=1,\ldots, K\Biggr)\\
  &=\bigl( V_j(\vep), \, j=-K,\ldots, K\bigr) 
\end{align*}
as $n\to\infty$, where the law of the vector in the left hand side is
computed conditionally  on $E_0$. Indeed, the continuity of the exponential random
variable $T^*$ means that the boundary of the $K$-dimensional set
above has limiting probability zero. This proves \eqref{e:seq.conv}. 
\end{proof}

Finally, we prove our second main result.

\begin{proof}[Proof of Theorem \ref{sm.t2}]
  We take the stochastic process
  $$
  W^+_\vep(t)=
  \frac{\tau(\vep)}A\sum_{i=0}^{[t\vep^{-2}]}(U_i^--U_i^+), \, t\geq 0
  $$
  and its version in the direction of the negative time and prove for
  them   joint functional weak convergence. The claim of the theorem
  will then follow by an application of the continuous mapping
  theorem.   We start with some variance calculations. For a large $m$,
  \begin{align*}
\var\left( \sum_{i=0}^m U_i^-\right) =& \var \left( \sum_{i=0}^m
    \sum_{k=-\infty}^{-1} a_{i-k}Z_k^{-}\right) +  \var \left( \sum_{i=0}^m
    \sum_{k=0}^{\infty} a_{i-k}Z_k^{-}\right) \\
    =& \sum_{k=1}^\infty \left( \sum_{i=k}^{m+k}a_i\right)^2
       \var(Z_{-k}^-)
       + \sum_{k=0}^{m} \bigl(A^+_{m-k}+A_k^-\bigr)^2\var(Z_k^-)\\
+& \sum_{k=m+1}^\infty \left( \sum_{i=-k}^{m-k}a_i\right)^2 \var(Z_k^-).
  \end{align*}
  It is elementary that
  $$
  \var(Z_{k}^-) \to\sigma_Z^2 \ \text{as} \  \vep\to 0 
  $$
  uniformly in $k\in\bbz$. Therefore,
  $$
  \sum_{k=0}^{m}  \bigl(A^+_{m-k}+A_k^-\bigr)^2
\var(Z_k^-)\sim \sigma_Z^2 \sum_{k=0}^{m}
\bigl(A^+_{m-k}+A_k^-\bigr)^2 \sim m\sigma_Z^2A^2
 $$
  as $\vep\to 0, \, m\to\infty$. Furthermore,
 $$
  \sum_{k=1}^\infty \left( \sum_{i=k}^{m+k}a_i\right)^2
  \var(Z_{-k}^-) \sim \sigma_Z^2 \sum_{k=1}^\infty \left(
    \sum_{i=k}^{m+k}a_i\right)^2 = o(m)
 $$
   as $\vep\to 0, \, m\to\infty$, with the last statement an easy
   consequence of the absolute summability of $(a_i)$.  Similarly, 
   $$
  \sum_{k=m+1}^\infty \left( \sum_{i=-k}^{m-k}a_i\right)^2 \var(Z_k^-) =  o(m)
 $$
   as $\vep\to 0, \, m\to\infty$.

The   same  argument shows that we also have 
 $$
   \sum_{k=0}^{m} \bigl(A^+_{m-k}+A_k^-\bigr)^2\var(Z_k^+)\sim \sigma_Z^2 \sum_{k=0}^{m}
  \bigl(A^+_{m-k}+A_k^-\bigr)^2 \sim m\sigma_Z^2A^2,
$$
$$
  \sum_{k=1}^\infty \left( \sum_{i=k}^{m+k}a_i\right)^2
  \var(Z_{-k}^+) \sim \sigma_Z^2 \sum_{k=1}^\infty \left(
    \sum_{i=k}^{m+k}a_i\right)^2 = o(m),
  $$
  $$
\sum_{k=m+1}^\infty \left( \sum_{i=-k}^{m-k}a_i\right)^2 \var(Z_k^+) =  o(m)
 $$
   as $\vep\to 0, \, m\to\infty$.  We  write 
\begin{align} \label{e:W}
W^+_\vep(t)=& \ W_\vep(t)\\
=& \ \frac{\tau(\vep)}A\sum_{i=0}^{[t\vep^{-2}]}(U_i^--U_i^+) \\
\notag =&\left[\frac{\tau(\vep)}A\sum_{i=0}^{[t\vep^{-2}]}
          \sum_{k=-\infty}^{-1} a_{i-k}Z_k^{-} -
\frac{\tau(\vep)}A\sum_{i=0}^{[t\vep^{-2}]}
          \sum_{k=-\infty}^{-1} a_{i-k}Z_k^{+} \right]   \\      
\notag &+\left[\frac{\tau(\vep)}A\sum_{i=0}^{[t\vep^{-2}]}
          \sum_{k=0}^{[t\vep^{-2}]} a_{i-k}Z_k^{-} -
\frac{\tau(\vep)}A\sum_{i=0}^{[t\vep^{-2}]}
          \sum_{k=0}^{[t\vep^{-2}]} a_{i-k}Z_k^{+} \right]   \\      
\notag &+\left[\frac{\tau(\vep)}A\sum_{i=0}^{[t\vep^{-2}]}
          \sum_{k=[t\vep^{-2}] +1}^{\infty} a_{i-k}Z_k^{-} -
\frac{\tau(\vep)}A\sum_{i=0}^{[t\vep^{-2}]}
          \sum_{k=[t\vep^{-2}] +1}^{\infty} a_{i-k}Z_k^{+} \right]   \\  
\notag =:& \ \ \ W_\vep^{(1)}(t) + W_\vep^{(2)}(t) + W_\vep^{(3)}(t), \ \vep>0,\, t\ge0. 
\end{align}
For typographical convenience we will omit the superscript in $W^+_\vep$ for now
and bring it back at a certain point later on. 
The assumption $EZ=0$ and \eqref{sm.eqtau} imply that, as $\vep\to0$,
\begin{equation}
\label{sm.t2.eq0}\vep/ A\sim\tau(\vep)\varphi_Z^\pp(0)=\sigma_Z^2\tau(\vep)\,.
\end{equation}
We have, therefore, verified that $\var\bigl(
W_\vep^{(j)}(t)\bigr)\to 0$ as $\vep\to 0$ for every $t$ and $j=1,3$, 
so for every such $j$, 
\begin{equation} \label{e:W1}
W_\vep^{(j)}(t)-E\bigl(
W_\vep^{(j)}(t)\bigr)\to 0 \ \text{in probability as} \ \vep\to 0.
\end{equation}
Furthermore,   for every $t$, as
$\vep\to 0$, 
\begin{align*}
\Var(W_\vep^{(2)}(t)) 
                 \to\frac {2t}{A^2\sigma_Z^2}\,.
\end{align*}
A similar calculation shows that for $0\le s\le t$,
\[
\lim_{\vep\to0}\Cov(W_\vep^{(2)}(s),W_\vep^{(2)}(t))=\frac {2s}{A^2\sigma_Z^2}\,.
\]

Observe next that the third absolute moment of both $(Z_k^-)$ and
$(Z_k^+)$ is bounded uniformly in $\vep$ and $k$. Therefore, the
Lindeberg condition is satisfied by the triangular array defined by
any finite linear combination of the type $\theta_1
W_\vep^{(2)}(t_1)+\ldots + \theta_d W_\vep^{(2)}(t_d)$. Applying the
Lindeberg Central Limit Theorem (see e.g. Theorem 27.2 in
\cite{billingsley:1995}) and the Cram\'er-Wold device we conclude that
the finite dimensional distributions of
$W_\vep^{(2)}(t)-E(W_\vep^{(2)}(t))$ converge to those of
$(A\sigma_Z)^{-1}\sqrt2B_t$, where $B_t$ is a standard Brownian
motion. It follows from \eqref{e:W1} that the finite dimensional distributions of
$W_\vep(t)-E(W_\vep(t))$ converge to the same limit.

Next, let $0\leq s<t$. If $\vep^2>(t-s)$, then for any $s\leq r\leq t$
either
$$
W_\vep(t)-E(W_\vep(t))= W_\vep(r)-E(W_\vep(r)) \ \text{a.s.}
$$
or
$$
W_\vep(s)-E(W_\vep(s))= W_\vep(r)-E(W_\vep(r)) \ \text{a.s.}, 
$$
so that 
\begin{align*}
&  E\Biggl[\Bigl(W_\vep(t)-W_\vep(r)-E\bigl(W_\vep(t)-W_\vep(r)\bigr)\Bigr)^2\\
&\hskip 1in \Bigl(W_\vep(r)-W_\vep(s)-E\bigl(W_\vep(r)-W_\vep(s)\bigr)\Bigr)^2\Biggr]=0. 
\end{align*}
Suppose now that $\vep^2\leq (t-s)$. We have
\begin{align*}
&E\left[\Bigl(W_\vep(t)-W_\vep(s)-E\bigl(W_\vep(t)-W_\vep(s)\bigr)\Bigr)^4\right] \\
\leq
 12 &E\left[\Bigl(W_\vep^{(1)}(t)-W_\vep^{(1)}(s)-E\bigl(W_\vep^{(1)}(t)-W_\vep^{(1)}(s)\bigr)\Bigr)^4\right] \\
+   12
  &E\left[\Bigl(W_\vep^{(2)}(t)-W_\vep^{(2)}(s)-E\bigl(W_\vep^{(2)}(t)-W_\vep^{(2)}(s)\bigr)\Bigr)^4\right]\\
 +   12
  &E\left[\Bigl(W_\vep^{(3)}(t)-W_\vep^{(2)}(s)-E\bigl(W_\vep^{(3)}(t)-W_\vep^{(2)}(s)\bigr)\Bigr)^4\right].
\end{align*}
For a positive constant $C$ independent of $\vep,s,t$, that may change
from appearance to appearance, 
\begin{align} 
\notag &E\Biggl[\Biggl(\frac{\tau(\vep)}{A}\Bigl(\sum_{i=0}^{[t\vep^{-2}]}
          \sum_{k=0}^{[t\vep^{-2}]}  a_{i-k}\bigl( Z_k^{-} -E(Z_k^{-}
                 )\bigr)\\
\notag                 &\hskip 1in - \sum_{i=0}^{[s\vep^{-2}]}
          \sum_{k=0}^{[s\vep^{-2}]}  a_{i-k}\bigl( Z_k^{-} -E(Z_k^{-}
                 )\bigr) \Bigr)
\Biggr)^4\Biggr]\\
\label{e:two.terms} \leq& \ C\vep^4 E\left(\sum_{k=0}^{[s\vep^{-2}]}\bigl( Z_k^{-} -E(Z_k^{-}
                 )\bigr) \sum_{i=[s\vep^{-2}]-k}^{[t\vep^{-2}]-k} a_i\right)^4\\
\notag & +C\vep^4 E\left( \sum_{k=[s\vep^{-2}]+1}^{[t\vep^{-2}]} \bigl( Z_k^{-} -E(Z_k^{-}
                 )\bigr) \sum_{i=-k}^{[t\vep^{-2}]-k} a_i\right)^4.
\end{align}
Since the
fourth moments of $(Z_k^-)$ are bounded uniformly in $\vep$ and $k$,
and the coefficients $(a_i)$ are absolutely summable, the first term
in the right hand side can be bounded by 
\begin{align*}
 & C\vep^4  \sum_{k=0}^{[s\vep^{-2}]} E\bigl( Z_k^{-} -E(Z_k^{-}
  )\bigr)^4 \left( \sum_{i=[s\vep^{-2}]-k}^{[t\vep^{-2}]-k} |a_i|
\right)^4\\
 +& \left[ C\vep^2 \sum_{k=0}^{[s\vep^{-2}]} E\bigl( Z_k^{-} -E(Z_k^{-}
  )\bigr)^2 \left( \sum_{i=[s\vep^{-2}]-k}^{[t\vep^{-2}]-k} |a_i|
\right)^2\right]^2\\
  \leq& C\vep^4  \sum_{k=0}^\infty \sum_{i=-\infty}^\infty 
       |a_{i-k}|+\left[C\vep^2  \sum_{k=0}^\infty  \sum_{i=[s\vep^{-2}]-k}^{[t\vep^{-2}]-k} |a_i|
\right]^2.
\end{align*}
Since
\begin{align*}
&\sum_{k=0}^\infty \sum_{i=[s\vep^{-2}]-k}^{[t\vep^{-2}]-k} |a_i|
  \leq \bigl( [t\vep^{-2}]-[s\vep^{-2}]\bigr) \sum_{i=-\infty}^\infty |a_i|\\
  \leq& \bigl(t\vep^{-2}-(s\vep^{-2}-1)\bigr) \sum_{i=-\infty}^\infty |a_i|
        \leq 2(t-s)\vep^{-2}\sum_{i=-\infty}^\infty |a_i|, 
\end{align*}
we conclude that the first term in the right hand side of
\eqref{e:two.terms} is bounded by $ C(t-s)^2$. In a similar way one can
show that the second term in the right hand side of
\eqref{e:two.terms} is also bounded by $ C(t-s)^2$. 

The same argument shows that
\begin{align*}
&E\Biggl[\Biggl(\frac{\tau(\vep)}{A}\Bigl(\sum_{i=0}^{[t\vep^{-2}]}
          \sum_{k=0}^{[t\vep^{-2}]}  a_{i-k}\bigl( Z_k^{+} -E(Z_k^{+}
                 )\bigr)\\
                 &\hskip 1in  - \sum_{i=0}^{[s\vep^{-2}]}
          \sum_{k=0}^{[s\vep^{-2}]}  a_{i-k}\bigl( Z_k^{+} -E(Z_k^{+}    )\bigr) \Bigr)
\Biggr)^4\Biggr] \\
&\leq C(t-s)^2,
\end{align*}
so that
$$
E\left[\Bigl(W_\vep^{(2)}(t)-W_\vep^{(2)}(s)-E\bigl(W_\vep^{(2)}(t)-W_\vep^{(2)}(s)\bigr)\Bigr)^4\right] 
\leq C(t-s)^2. 
$$
In the same way we can check that 
$$
E\left[\Bigl(W_\vep^{(j)}(t)-W_\vep^{(1)}(s)-E\bigl(W_\vep^{(j)}(t)-W_\vep^{(1)}(s)\bigr)\Bigr)^4\right] 
\leq C(t-s)^2, \ j=1,3,
$$
so we conclude that
\begin{equation} \label{e:4th}
E\left[\Bigl(W_\vep(t)-W_\vep(s)-E\bigl(W_\vep(t)-W_\vep(s)\bigr)\Bigr)^4\right] 
\leq C(t-s)^2 
\end{equation}
 if $\vep^2\leq (t-s)$. By the Cauchy-Schwarz inequality, for any
 $0\leq s\leq r\leq t$ we have
 \begin{align*}
  &E\Bigl[\Biggl(W_\vep(t)-W_\vep(r)-E\bigl(W_\vep(t)-W_\vep(r)\bigr)\Bigr)^2\\
&\hskip 1in \Bigl(W_\vep(r)-W_\vep(s)-E\bigl(W_\vep(r)-W_\vep(s)\bigr)\Bigr)^2\Biggr] \\
&   \leq C(t-s)^2 
\end{align*}
if $\vep^2\leq (t-s)$. Appealing to Theorem 13.5 of
\cite{billingsley:1999}  we conclude that for any fixed $T$, the family 
\[
\left\{\left(W_\vep(t)-E(W_\vep(t)):0\le t\le T\right):\vep>0\right\}
\]
is tight in $D[0,T]$ endowed with the Skorohod  $J_1$
topology. Therefore, as $\vep\to0$, 
\begin{equation}
\label{sm.t2.eq1}\left(W_\vep(t)-E(W_\vep(t)):0\le t\le T\right)\Rightarrow\left((A\sigma_Z)^{-1}\sqrt2B_t:0\le t\le T\right)\,,
\end{equation}
in $D[0,T]$. Furthermore,
\begin{align*}
&E\left(W_\vep(t)\right) \\
=& 
\frac{\tau(\vep)}{A}\Biggl[\sum_{i=0}^{[t\vep^{-2}]}
                             \sum_{k=-\infty}^{-1} a_{i-k}Z_k^{-}
                             +\sum_{i=0}^{[t\vep^{-2}]}
          \sum_{k=0}^{[t\vep^{-2}]} a_{i-k}Z_k^{-} 
+ \sum_{i=0}^{[t\vep^{-2}]}
          \sum_{k=[t\vep^{-2}] +1}^{\infty} a_{i-k}Z_k^{-}
\\
&\hskip 0.3in -\sum_{i=0}^{[t\vep^{-2}]}
                             \sum_{k=-\infty}^{-1} a_{i-k}Z_k^{+}
-\sum_{i=0}^{[t\vep^{-2}]}
          \sum_{k=0}^{[t\vep^{-2}]} a_{i-k}Z_k^{+}  
  - \sum_{i=0}^{[t\vep^{-2}]}
          \sum_{k=[t\vep^{-2}] +1}^{\infty} a_{i-k}Z_k^{+} \Biggr]
   \end{align*}
  Clearly, 
  $|EZ_k^{\pm}|=O(\tau(\vep))$ uniformly in $\vep$ and
  $k\in\bbz$. Therefore,
  \begin{align*}
\left| \sum_{i=0}^{[t\vep^{-2}]}
    \sum_{k=-\infty}^{-1} a_{i-k}EZ_k^{-} \right| \leq O(\tau(\vep))
  \sum_{i=0}^{[t\vep^{-2}]}\sum_{k=i+1}^\infty   |a_{k}|
    =o\bigl(\tau(\vep)\vep^{-2}\bigr) 
  \end{align*}
  uniformly in $t$ in a compact set. Similarly,
 \begin{align*}
\left| \sum_{i=0}^{[t\vep^{-2}]}
       \sum_{k=[t\vep^{-2}] +1}^{\infty} a_{i-k}Z_k^{-}
 \right|=o\bigl(\tau(\vep)\vep^{-2}\bigr), 
\end{align*}
and by the same argument, 
   \begin{align*}
\left| \sum_{i=0}^{[t\vep^{-2}]}
    \sum_{k=-\infty}^{-1} a_{i-k}EZ_k^{+} \right|  
    &=o\bigl(\tau(\vep)\vep^{-2}\bigr), \\
 \left| \sum_{i=0}^{[t\vep^{-2}]}
       \sum_{k=[t\vep^{-2}] +1}^{\infty} a_{i-k}Z_k^{+}
 \right|&=o\bigl(\tau(\vep)\vep^{-2}\bigr),
  \end{align*}
 all uniformly in $t$ in a compact set. 

Finally, $EZ_k^-\sim \tau(\vep)\sigma_Z^2(A^++A_k^-)/A$
  as $\vep\to 0$ uniformly in $k \geq 0$, so 
  \begin{align*}
  \sum_{i=0}^{[t\vep^{-2}]}
    \sum_{k=0}^{[t\vep^{-2}]}  a_{i-k}EZ_k^{-} \sim A\tau(\vep)\sigma_Z^2t\vep^{-2}.
  \end{align*}
  Similarly, $EZ_k^+\sim \tau(\vep)\sigma_Z^2(A^--A_k^-)/A$
  as $\vep\to 0$ uniformly in $k \geq 0$, so
  \begin{align*}
  \sum_{i=0}^{[t\vep^{-2}]}
    \sum_{k=0}^{[t\vep^{-2}]}  a_{i-k}EZ_k^{+}  =o\bigl(\tau(\vep)\vep^{-2}\bigr),
  \end{align*}
all  uniformly in $t$ in a compact set. We conclude by \eqref{sm.t2.eq0}
 that for all $\vep>0$ small enough,
 \begin{align} \label{e:mean.l}
E\left(W_\vep(t)\right)\geq \frac t{2A^2\sigma_Z^2}, \ t\geq 1
 \end{align}
 and
\begin{align*}
E\left(W_\vep(t)\right)& \to\frac t{A^2\sigma_Z^2}, \ \vep\to0,
\end{align*}
uniformly in $t$ in a compact set. Since the addition in $D[0,T]$ is
continuous at continuous functions, this 
 along with \eqref{sm.t2.eq1} shows that
\begin{equation} \label{e:weak.T}
\left(W^+_\vep(t):0\le t\le
  T\right)\Rightarrow\left((A\sigma_Z)^{-1}\sqrt2B^+_t+(A\sigma_Z)^{-2}t:0\le
  t\le T\right), 
\end{equation}
in $D[0,T]$ as $\vep\to0$. 
Notice that we have brought back the superscript in
$W^+_\vep$ omitted since \eqref{e:W}, and we have also added a
superscript to the standard Brownian motion $B^+$.

Clearly we can also define 
\begin{align} \label{e:Wmin}
W^-_\vep(t)=&\frac{\tau(\vep)}A\sum^{-1}_{i=-[t\vep^{-2}]}(U_i^+-U_i^-),
              \, t\geq 0, 
 \end{align}
and use the same argument (or, even simpler, just appeal to time
inversion) to show that for any $T>0$, 
\begin{equation} \label{e:weak.Tmin}
\left(W^-_\vep(t):0\le t\le
  T\right)\Rightarrow\left((A\sigma_Z)^{-1}\sqrt2B^-_t+(A\sigma_Z)^{-2}t:0\le
  t\le T\right), 
\end{equation}
in $D[0.T]$ as $\vep\to0$, 
where $B^-$ is a standard Brownian motion.

We claim that, in fact, we have joint convergence
\begin{align} \label{e:weak.Tjoint}
&\Bigl(\bigl(W^+_\vep(t),:0\le t\le T\bigr) ,\, \bigl(W^-_\vep(t),:0\le t\le T\bigr)\Bigr)\\
\notag \Rightarrow&\Bigl(\bigl((A\sigma_Z)^{-1}\sqrt2 B^+_t+(A\sigma_Z)^{-2}t:0\le
  t\le T\bigr), \\
\notag &\hskip 0.1in \bigl((A\sigma_Z)^{-1}\sqrt2 B^-_t+(A\sigma_Z)^{-2}t:0\le
  t\le T\bigr)\Bigr), 
\end{align}
in $D[0.T]\times D[0.T]$ as $\vep\to0$, where the standard Brownian
motions in the right hand side are independent. To see this, recall
that the only term in \eqref{e:W} that contributes to the randomness
in the weak limit of $\bigl(W^+_\vep(t),:0\le t\le T\bigr)$ is the term
$\bigl(W^{(2)}_\vep(t),:0\le t\le T\bigr)$, which is a function of $(Z_k^\pm)$
with $k\geq 0$. An identical argument shows that the only term in the
same expansion of $\bigl(W^-_\vep(t),:0\le t\le T\bigr)$ that contributes to the
randomness in the limit  is a function of $(Z_k^\pm)$
with $k<0$. Since the random variables $(Z_k^\pm)$ are independent, we
obtain the claimed joint weak convergence in \eqref{e:weak.Tjoint},
with independent components in the limit.

For any real $\lambda$ the function $\varphi:\, D[0,T]\to\bbr$ defined
by
$$
\varphi(f) = \int_0^T \one\bigl(\lambda\geq f(t)\bigr)\, dt
$$
is continuous at any continuous $f$ that takes  value $\lambda$
only on a set of measure 0.  Therefore, for any such $\lambda$, by the
continuous mapping theorem,
\begin{align*}
  \int_0^T \one\bigl( \lambda\geq W^\pm_\vep(t)\bigr)\, dt \Rightarrow&
  \int_0^T \one\bigl(\lambda\ge
    (A\sigma_Z)^{-1}\sqrt2B_t^\pm+(A\sigma_Z)^{-2}t\bigr)\, dt 
\\
\eid&\int_0^T
     \one\bigl(\lambda\ge\sqrt2B^\pm_{(A\sigma_Z)^{-2}t}+(A\sigma_Z)^{-2}t\bigr)\,
     dt   \\
=& \ A^2\sigma_Z^2\int_0^{(A\sigma_Z)^2T}\one\bigl(\lambda\ge\sqrt2B^\pm_t+t\bigr)\,
               dt\,.
\end{align*}

Noticing that we can write  
\[
V_j^+(\vep)=\one\left(T_0\ge\frac{\tau(\vep)}A\sum_{i=0}^{j-1}(U_i^- -
  U_i^+)\right), \ j\geq 1,
\]
where $T_0$ is a standard exponential random variable independent of
the collection $(Z_j^u:j\in\bbz,u=+\text{ or }-)$,  we conclude that
for any $T>0$, 
\begin{align*}
\vep^2\sum_{j=0}^{[T\vep^{-2}]}V^+_{j}(\vep) 
&=\int_0^T \one\bigl(T_0\ge W^+_\vep(t)\bigr) \, dt -
     V^+_{[T\vep^{-2}]}\bigl(  T-\vep^2  [T\vep^{-2}]\bigr) 
\\
&\Rightarrow
     A^2\sigma_Z^2\int_0^{(A\sigma_Z)^2T}\one\bigl(T_0\ge\sqrt2B_t^++t)\,
     dt. 
\end{align*}

It is clear that the latter integral converges a.s., as $T\to\infty$, to the integral
prescribed in the theorem. Therefore, we can appeal to Theorem 3.2 in
\cite{billingsley:1999}, which requires us to show  that for
any $\delta>0$,
\begin{equation}
\label{sm.t2.eq2}\lim_{T\to\infty}\limsup_{\vep\to0}P\left(\vep^2\sum_{j=[T\vep^{-2}]+1}^\infty V^+_j(\vep)>\delta\right)=0\,.
\end{equation}
However, by Markov's inequality 
\begin{align*}
&P\left(\vep^2\sum_{j=[T\vep^{-2}]+1}^\infty
  V^+_j(\vep)>\delta\right) 
\leq \vep^2\delta^{-1}\sum_{j=[T\vep^{-2}]+1}^\infty P\bigl( T_0\geq
      W^+_\vep((j-1)\vep^2)\bigr)\\
      \leq&  \vep^2\delta^{-1}\sum_{j=[T\vep^{-2}]+1}^\infty P\left(
      W^+_\vep((j-1)\vep^2)\leq
      \frac{(j-1)\vep^2}{4A^2\sigma_Z^2}\right)\\
 +&  \vep^2\delta^{-1}\sum_{j=[T\vep^{-2}]+1}^\infty\exp\left\{
    -\frac{(j-1)\vep^2}{4A^2\sigma_Z^2}\right\}. 
\end{align*}
By \eqref{e:mean.l} and \eqref{e:4th} we have for some positive
constant $C$, 
$$
P\left(
      W^+_\vep((j-1)\vep^2)\leq
      \frac{(j-1)\vep^2}{4A^2\sigma_Z^2}\right) \leq
    C\vep^{-4}(j-1)^{-2}
    $$
for all $j>[T\vep^{-2}]$, $T\geq 1$ and $\vep>0$ small enough. This
estimate suffices to establish \eqref{sm.t2.eq2}.

Note that this argument also shows that that for small $\vep>0$,
$ED_\vep^+<\infty$, so $D_\vep^+<\infty$ a.s.. 

Since a similar argument can be applied to
$\vep^2\sum_{j=0}^{[T\vep^{-2}]}V^-_{j}(\vep)$, 
the proof is complete. 
\end{proof}


\section{Discussion} \label{sec:discussion}

As is usually the case with large deviations, the limiting
distributions obtained in \eqref{e:seq.conv}  and \eqref{sm.t1.claim}
of Theorem \ref{sm.t1} depend largely on the underlying model through the
distribution of the noise variables $F_Z$ and the coefficients
$(a_i)$. This dependence largely disappears in Theorem \ref{sm.t2}
where the limiting distribution depends only on the noise variance
$\sigma_Z^2$ and the sum of the coefficients $A$. This can be
understood by viewing the case of a small overshoot $\vep$ as
approaching the regime of moderate deviations. Indeed, in the case of
moderate deviations one expects that the central limit behaviour
becomes visible and leads to a collapse of the model ingredients
necessary to describe the limit to a bare minimum consisting of second
order information.

This naturally leads to the question of a difference of how large
deviations cluster between the short memory moving average processes
and long memory moving average processes. It is common to say that the coefficients of the
moving average process \eqref{e:MA} with long memory are square summable but not
absolutely summable. Assuming certain regularity of the coefficients
$(a_i)$ (e.g. their regular variation), one can show that
 for any fixed $j\ge1$ and $\vep>0$,
\[
\lim_{n\to\infty}P\left(E_j(n,\vep)\bigr|E_0(n,\vep)\right)=1\,,
\]
so one expects infinitely many events $(E_j(n,\vep))$ to happen once
$E_0(n,\vep)$ does. This necessitates different limiting procedures when
studying large deviations clustering of such long memory processes. It
is important to note that for these long memory moving average
processes, in the notation of \eqref{eq.defsn}, $n=o(\var(S_n))$, so
one can view the events $(E_j(n,\vep))$ as moderate deviation events
and not large deviation events. Indeed, it turns out that a natural
limiting procedure leads to a collapse in the amount of information
about the model needed to describe the limit, which is similar to the
situation with Theorem \ref{sm.t2} in the present, short memory
case.  This is described in details in \cite{chakrabarty:samorodnitsky:2023}.

\bigskip

{\bf Acknowledgments} Two anonymous referees provided the authors with
incredibly useful and insightful comments. We are very grateful.

\appendix

\section{Some useful facts} \label{sec:appendix}

The following non-logarithmic version
of a large deviation statement and a related estimate are from \cite{chaganty:sethuraman:1983}. 
\begin{theorem}\label{f.cs}
Let $\{T_n\}$ be a sequence of random variables with 
\[
E\left(e^{zT_n}\right)<\infty \  \text{for any} \  z\in\bbr, \
n\ge1\,. 
\]
For a sequence $\{a_n\}$ of  positive numbers with
\begin{equation}
\label{cs.eq1}\lim_{n\to\infty}a_n=\infty
\end{equation}
we denote 
\[
\psi_n(z)=a_n^{-1}\log E\left(e^{zT_n}\right), \ z\in\bbr, \ n\ge1\,.
\]

Let $\{m_n\}$ be a bounded sequence of real numbers. 
Assume that there exists a bounded positive sequence $\{\tau_n\}$ satisfying
\begin{equation}
\label{cs.eq2}\psi_n^\prime(\tau_n)=m_n, \ n\ge1\,,
\end{equation}
\begin{equation}
\label{cs.eq3}a_n^{-1/2}=o(\tau_n), \ n\to\infty\,,
\end{equation}
and such that for all fixed $\delta,\lambda>0$,
\begin{equation}
\label{cs.nonlat}\sup_{\delta\le|t|\le\lambda\tau_n}\left|\frac1{E\left(e^{\tau_n
        T_n}\right)}E\left(e^{(\tau_n+i t)T_n}\right)\right|=o\left(
  a_n^{-1/2}\right), \ n\to\infty\,,
\end{equation}
(with the supremum of the empty set defined as zero). Furthermore,
assume that  
\begin{equation}
\label{cs.eq4} \sup_{n\ge1,\, z\in[-a,a]}|\psi_n(z)|<\infty\ \text{for
  any}\ a>0
\end{equation}
and that 
\begin{equation}
\label{cs.eq5}\inf_{n\ge1}\psi_n^{\prime\prime}(\tau_n)>0\,.
\end{equation}

\medskip

{\bf (a)} \ Under the above assumption, 
\begin{equation}  \label{cs.eq6}
  P\left(a_n^{-1}T_n\ge
  m_n\right)\sim\frac1{\tau_n\sqrt{2\pi
    a_n\psi_n^{\prime\prime}(\tau_n)}}
\exp\left\{-a_n\left(m_n\tau_n-\psi_n(\tau_n)\right)\right\}, \ n\to\infty.
\end{equation}

\medskip

{\bf (b)} \ Let
\[
b_n=\tau_n\sqrt{a_n\psi_n^{\pp}(\tau_n)},
\]
and let $T_n^*$ be a random variable with the law
\[
P(T_n^*\in\,du)=\frac1{E(e^{\tau_nT_n})} e^{u\tau_n}P(T_n\in\,du)\,.
\]
Then  
\begin{equation*}
\sup_{n\ge 1,\, y\in\bbr}{b_n} P\left(y\le\tau_nT^*_n\le y+1\right)
<\infty. 
\end{equation*}
\end{theorem}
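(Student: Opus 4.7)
The plan is to prove both parts by first performing the Esscher (exponential tilting) transform and then analyzing the tilted law $T_n^\ast$ via a local limit theorem. The core identity, obtained by unwinding the definition of $T_n^\ast$, is
\begin{equation*}
P(T_n\ge a_n m_n) \;=\; e^{-a_n(m_n\tau_n-\psi_n(\tau_n))}\int_0^\infty e^{-\tau_n v}\,dF_n(v),
\end{equation*}
where $F_n(v)=P(T_n^\ast-a_nm_n\le v)$. Differentiating the MGF of $T_n^\ast$ shows $E(T_n^\ast)=a_n\psi_n'(\tau_n)=a_nm_n$ and $\operatorname{Var}(T_n^\ast)=a_n\psi_n''(\tau_n)=:\sigma_n^2$, so $T_n^\ast$ is centered at $a_nm_n$ with standard deviation $\sigma_n$. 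The claimed asymptotic in (a) reduces to showing that the final integral is asymptotic to $1/(\tau_n\sqrt{2\pi}\,\sigma_n)$.

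The crucial ingredient is a uniform local limit theorem for $T_n^\ast$ near its mean, namely a statement of the form
\begin{equation*}
\sup_{|v|\le K}\Bigl|\sigma_n\,P\bigl(T_n^\ast-a_nm_n\in[v,v+h]\bigr)-h\varphi_{0,1}(v/\sigma_n)\Bigr|\longrightarrow 0
\end{equation*}
for each fixed $h,K>0$, where $\varphi_{0,1}$ is the standard normal density. I would obtain this via Fourier inversion of the characteristic function of the standardized tilted variable $(T_n^\ast-a_nm_n)/\sigma_n$, whose logarithm is $a_n[\psi_n(\tau_n+it/\sigma_n)-\psi_n(\tau_n)]-ita_nm_n/\sigma_n$. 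The inversion integral I would split into three pieces: (i) a central piece $|t|\le\delta\sigma_n$, where a Taylor expansion of $\psi_n$ around $\tau_n$, controlled by \eqref{cs.eq4} and by the uniform lower bound \eqref{cs.eq5} on $\psi_n''(\tau_n)$, yields convergence of the characteristic function to $e^{-t^2/2}$; (ii) a middle piece $\delta\sigma_n\le|t|\le\lambda\tau_n\sigma_n$, where hypothesis \eqref{cs.nonlat} forces the tilted characteristic function to be $o(a_n^{-1/2})=o(\sigma_n^{-1})$; and (iii) a tail piece $|t|>\lambda\tau_n\sigma_n$, which is controlled by absolute integrability after one smoothing convolution (using the standard ``smoothing inequality'' trick from Feller, since $\psi_n$ is finite on all of $\bbr$).

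Third, combine the local limit theorem with the Laplace-type integral. Since $\tau_n\sigma_n\to\infty$ by \eqref{cs.eq3} and \eqref{cs.eq5}, the exponential weight $e^{-\tau_n v}$ concentrates mass of the integrand on the scale $v=O(1/\tau_n)=o(\sigma_n)$, i.e. deep inside the Gaussian bulk of $F_n$. On this scale, $dF_n(v)\approx (\sqrt{2\pi}\sigma_n)^{-1}\,dv$ by the local limit theorem, so
\begin{equation*}
\int_0^\infty e^{-\tau_n v}\,dF_n(v)\;\sim\;\frac{1}{\sqrt{2\pi}\,\sigma_n}\int_0^\infty e^{-\tau_n v}\,dv\;=\;\frac{1}{\tau_n\sqrt{2\pi a_n\psi_n''(\tau_n)}},
\end{equation*}
yielding \eqref{cs.eq6}. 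For part (b), the same local limit theorem, rescaled by $\tau_n$, gives that $\tau_n T_n^\ast-\tau_n a_nm_n$ has ``density in unit intervals'' bounded by a constant multiple of $1/(\tau_n\sigma_n)=1/b_n$, uniformly in $y$ and $n$; for $y$ outside the Gaussian bulk, a Chernoff/Markov bound using \eqref{cs.eq4} provides an even stronger exponential estimate.

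The main obstacle is the uniform local limit theorem in step two: because $\{T_n\}$ is an arbitrary sequence, not a sum of i.i.d.\ summands, I cannot invoke a standard Stone-type local CLT. Everything must be extracted from the MGF hypotheses \eqref{cs.eq4}, \eqref{cs.eq5} and the nonlattice bound \eqref{cs.nonlat}, and the uniformity in $n$ must be carefully tracked through each of the three Fourier-inversion regions.
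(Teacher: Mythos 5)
You should first be aware that the paper does not actually prove this theorem: it is imported wholesale from Chaganty and Sethuraman (1983), part (a) being their Theorem 3.3 and part (b) following from their Lemmas 3.1 and 3.2 together with (2.9) of their Theorem 2.3. Your sketch is, in outline, a faithful reconstruction of that source's argument: Esscher tilt, reduction of the tail probability to the Laplace-type integral $\int_0^\infty e^{-\tau_n v}\,dF_n(v)$, a uniform local limit theorem for the tilted variable obtained by splitting the Fourier inversion into a central region (Taylor expansion controlled by \eqref{cs.eq4} and \eqref{cs.eq5}), an intermediate region (hypothesis \eqref{cs.nonlat}), and the observation that $\tau_n\sigma_n\to\infty$ by \eqref{cs.eq3} and \eqref{cs.eq5}, so that the exponential weight concentrates deep inside the Gaussian bulk. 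Part (b) is likewise the uniform concentration-function estimate that Chaganty and Sethuraman isolate as their condition (2.8).

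The one step that would fail as written is your treatment of region (iii), $|t|>\lambda\tau_n\sigma_n$. The hypotheses give no information whatsoever about the characteristic function of $T_n^*$ beyond frequency $\lambda\tau_n$; in particular it need not be absolutely integrable ($T_n^*$ may have no density at all), so ``controlled by absolute integrability'' is not available. The correct mechanism --- which your parenthetical about smoothing is gesturing at --- is that region (iii) is never integrated over: one convolves with a kernel whose Fourier transform is supported in $[-\lambda\tau_n\sigma_n,\lambda\tau_n\sigma_n]$ (equivalently, applies Esseen's smoothing inequality with cutoff $T=\lambda\tau_n\sigma_n$), accepts a resolution error of order $1/(\lambda\tau_n\sigma_n)$, observes that this is $O(1/\lambda)$ relative to the target scale $1/(\tau_n\sigma_n)$, and removes it by letting $\lambda\to\infty$ at the very end. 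This is precisely why \eqref{cs.nonlat} is required for every fixed $\lambda>0$ rather than a single one. With that correction, and with the uniformity in $n$ tracked through the three regions as you indicate, the sketch is sound; for part (b) the concentration inequality is already uniform in $y$, so the separate Chernoff bound outside the bulk is unnecessary.
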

\begin{proof}
The first part of the theorem is Theorem 3.3 in
\cite{chaganty:sethuraman:1983}. Furthermore, 
Lemmas 3.1 and 3.2 {\it ibid.} show that the hypotheses (2.7) and
(2.8) of Theorem 2.3  therein hold, and the second part of Theorem \ref{f.cs}
follows from (2.9) of that paper. 
\end{proof}



\end{document}